\documentclass{siamart251216}   

\usepackage{hyperref}
\usepackage{amsmath,amsfonts,amssymb,mathabx,bm}
\usepackage{graphicx,esint,ulem}
\usepackage{calc}
\usepackage{xcolor}
\usepackage{stmaryrd}   
\usepackage{wrapfig}
\usepackage{fancyvrb} 

\usepackage{color}

\usepackage{algorithm,algpseudocode}
\usepackage{algorithmicx}
\usepackage{hhline}     
\algrenewcommand\alglinenumber[1]{\footnotesize #1:} 

\usepackage{tcolorbox}
\usepackage[english]{babel}
\usepackage{tabu}
\usepackage{multirow}
\usepackage{mdframed}
\usepackage{cellspace} 
\usepackage{url}

\newcommand{\ul}{u_{\ell}}
\newcommand{\uh}{u_h}

\newcommand{\unh}{u_{nh}}
\newcommand{\ufh}{u_{fh}}

\newcommand{\Ap}{A^+}
\newcommand{\calNd}{\mathcal N_{\delta}}
\newcommand{\Kf}{K_f}
\newcommand{\Nf}{N_f}
\newcommand{\NL}{N_L}
\newcommand{\alphaF}{\alpha_F}

\newcommand{\sighat}{S}
\newcommand{\Nlam}{N_{\lambda}}
\newcommand{\sinc}{\text{sinc}}
\newcommand{\lambdam}{\lambda_{\max}}

\newcommand{\nm}{n_{\rm{max}}}

\definecolor{darkcyan}{rgb}{0.0, 0.55, 0.55}

\newcommand{\pt}{\partial_t}

\newcommand{\dt}{\Delta t}

\newcommand{\dk}{\Delta k}

\newcommand{\beq}{\begin{equation}}
\newcommand{\eeq}{\end{equation}}
\newcommand{\beqs}{\begin{equation*}}
\newcommand{\eeqs}{\end{equation*}}

\newcommand{\Nx}{N_x}

\newcommand{\Nt}{N_t}

\newcommand{\xv}{\bold x}

\newcommand{\yv}{\bold y}

\newcommand{\abs}[1]{\left\vert #1\right\vert}

\newcommand{\kv}{\bold k}

\newcommand{\calE}{\mathcal{E}}
\newcommand{\calH}{\mathcal{H}}
\newcommand{\norm}[1]{\Vert #1\Vert}

\newcommand{\calG}{\mathcal{G}}

\newcommand{\nv}{\bold n}
\newcommand{\mv}{\bold m}
\newcommand{\erf}{\text{erf}}

\newcommand{\ceil}[1]{\left\lceil#1 \right\rceil}
\newcommand{\Oh}[1]{\mathcal O\left(#1\right)}
\newcommand{\phiD}{\phi_{\Delta}}
\newcommand{\R}{\mathbb{R}}

\newsiamthm{assumption}{Assumption}       
\newsiamthm{example}{Example}             
\newsiamthm{lem}{Lemma}
\newsiamremark{remark}{Remark}               

\def\ccm{Center for Computational Mathematics, Flatiron Institute,
  New York, NY, 10010}
\def\nyu{Courant Institute of Mathematical Sciences,
  New York University, New York, NY, 10012}

\title{A spectral method for the rapid evaluation of hyperbolic potentials 
  in two dimensions using windowed Fourier projection}

\author{Nour G. Al Hassanieh%
  \textsuperscript{\dag}%
  \thanks{\nyu}
  \and
  Leslie Greengard%
    \textsuperscript{\dag}\footnotemark[1]
    \and
    Alex H. Barnett%
  \thanks{\ccm\
      (\email{nalhassanieh@flatironinstitute.org},
    \email{abarnett@flatironinstitute.org},
    \email{lgreengard@flatironinstitute.org}).}    
}
\begin{document}
\maketitle  

\begin{abstract}
  We present a fast algorithm for evaluating the (non-smooth) solution of the free-space
  two-dimensional (2D)
  scalar wave equation
  with many point sources, each with a high-frequency band-limited time signature.
  Such an algorithm is key to an efficient time-domain scattering
  solver using spatially-discretized hyperbolic layer potentials.
Given $M$ sources/targets and $N_t$ time steps, direct evaluation costs $O(M^2N_t^2)$,
due to the history dependence.
We develop a quasi-linear scaling algorithm
that splits the solution at a given time into (a) a non-smooth time-local part,
(b) a (smooth) {\it near history} involving
sources up to ${\mathcal O}(1)$ domain traversal times into the past,
plus
(c) a (very smooth) {\it far history} comprising all waves emitted before the near history.
The local part is computed directly via high-order quadrature.
A naive spatial Fourier transform for (b) plus (c)
would be both slowly converging and arbitrarily oscillatory as time progresses.
Yet in (b) the oscillations are controlled, so
we use the recent truncated windowed Fourier projection (TK-WFP) method to give
rapid convergence.
For (c)---present due to the weak Huygens' principle---%
we exploit a new large-time sum-of-exponentials approximation of the
free-space wave kernel.
Numerical examples with up to a million sources and targets,
a domain of $300\times 300$ wavelengths, and 6-digit accuracy, 
show an acceleration of five orders of magnitude relative to direct evaluation.
\end{abstract}

\begin{keyword}
wave equation, free-space, hyperbolic potentials, high-order, truncated kernel, Fourier methods
\end{keyword}


\section{Introduction}\label{sec:intro}

The rapid evaluation of free-space hyperbolic potentials---%
integral representations of the solution to the wave equation---is key to the 
development of geometrically flexible and high order accurate methods
for time domain wave scattering problems that arise in
acoustics \cite{kaltenbacher2018computational,Takahashi2014},  
electromagnetics \cite{CMS,liu18},
and elastodynamics \cite{takahashi03}.
Such methods are not subject to grid-based dispersion errors, can avoid restrictive stability-based CFL conditions, and do not require radiation boundary conditions. Moreover, for homogeneous problems (ones without a volumetric source), they require a discretization of the boundary alone.
The naive computation of
such hyperbolic potentials, however, is extremely expensive since the 
representation is global in both space and time.
Evolving the solution in the Fourier domain overcomes the history dependence,
as we will see below, but encounters two obstacles that make it appear 
impractical. First, the solution is non-smooth, so that the (spatial) Fourier
transform is slowly decaying, and, second, the Fourier 
transform becomes more and more oscillatory as time increases.
In the present paper, we design an
algorithm that circumvents both difficulties, making use of the
truncated
windowed Fourier projection method \cite{wfp2025,tkwfp3d}
and a new, 
non-oscillatory integral
representation of the
2D wave kernel for large time.

Our concern here is the evaluation of solutions
to the 2D scalar wave equation:
\beq\label{eq:wave2d}
\begin{cases}
\pt^2u - \Delta u = f(\xv,t), & \xv\in\mathbb R^{2}, \ t \in (0,T], \\
u(\xv,0) = \pt u(\xv,0) = 0, & \xv\in\mathbb R^{2}. 
\end{cases}
\eeq
whose exact solution is
\beq\label{eq:solnRep}
u(\xv,t)  = \int_0^t \int_{\mathbb R^2} G(\xv - \yv, t- \tau) f(\yv,\tau)d\yv d\tau, \qquad t \in (0,T], 
\eeq
where
$G(\xv,t)$ is the Green's function given by
\beq\label{eq:Green}
G(\xv,t) = \frac{H(t - \abs{\xv})}{2\pi\sqrt{t^2 - |\xv|^2}},
\qquad t>0, \ \xv\in\mathbb R^2,
\eeq
where $H$ is the usual Heaviside step function, and $|\cdot|$ denotes the Euclidean norm.
In order to focus on the development of a fast algorithm, we omit 
discussion of discretization and quadrature, and consider as our model
problem the case where $f$ is a sum of $M$ point sources,
\beq\label{eq:pointSources}
f(\xv,t) = \sum_{j=1}^M\delta(\xv-\yv_j)\sigma_j(t),\qquad \xv\in \mathbb R^2, 
\eeq
where $\delta(\xv)$ represents the two-dimensional Dirac delta distribution, and the time signatures $\sigma_j$ are smooth but possibly wide-band functions on $t\in\R$, such that $\sigma_j(t) = 0$ for $t\le 0$.
We restrict source locations $\yv_j$  to a computational domain $B := [-1,1]^2$. When the sources are distributed throughout
the domain, this calculation can be viewed as
a discretized {\it volume potential}.
When the sources are restricted to a boundary, 
it can be viewed as a discretized {\it single 
  layer potential}.
In either case, temporal discretization of \eqref{eq:solnRep}
on a time grid $t_n = n \dt$ with $\Nt$ total time steps would
result in a calculation of the form
\beq\label{eq:solnRep_discrete}
u(\xv_i,t_n) \approx
\sum_{l=1}^{n} \sum_{j=1}^M 
G(\xv_i - \yv_j, t_n- t_l) \sigma_j(t_l),
\qquad
i=1,\dots,\Nx, \;  n = 1,\dots,\Nt,
\eeq
for given target points $\{ \xv_i \}_{i=1}^{\Nx}$.
We assume that $\dt$ is sufficiently small to resolve all signatures $\sigma_j$
to the desired precision, and defer the issue of singular quadrature.
Yet by inspection of~\eqref{eq:solnRep_discrete}, it is clear that
direct evaluation requires $\Oh{M\Nx\Nt^2}$ operations,
which is quadratic in both space and time.
While we do not seek to review
the literature in detail, 
a brief overview of existing methods to cope with this computational burden is provided in section \ref{priorwork}.

From an analytic perspective, it is natural to consider Fourier 
analysis and the spectral representation of the Green's function.
For this, we define the spatial 2D Fourier transform and its inverse by
\beq
\label{eq:fourierdef}
\hat u (\kv,t) = \int_{\mathbb R^2} u(\xv,t) e^{i\kv\cdot \xv}d\xv, \quad \text{and} \quad u (\xv,t) =\frac{1}{(2\pi)^2} \int_{\mathbb R^2} \hat u(\kv,t) e^{-i\kv\cdot \xv}d\kv.
\eeq
It is well known, and straightforward to derive, that
the spectral form of the Green's function is
\beq\label{eq:spectralGreen}
G(\xv, t) = \frac{1}{(2\pi)^2} \int_{\mathbb R^2} \frac{\sin \kappa t}{\kappa} e^{-i\kv\cdot \xv }d\kv, \qquad \kappa := |\kv|, \quad t>0,
\eeq
and that
\beq
\label{eq:uexactF}
u(\xv,t) = \frac{1}{(2\pi)^2} 
\int_{\mathbb{R}^2}  e^{-i \kv \cdot \xv}
\int_0^t  
 \frac{\sin \kappa (t-\tau)}{\kappa} \sighat(\kv,\tau) d\tau
d\kv,
\eeq
the spectral source function being
\beq\label{eq:sighat}
\sighat(\kv,t) = \sum_{j = 1}^M  \sigma_j(t) e^{i \kv \cdot \yv_j}.
\eeq
There are three difficulties in making such a Fourier-based solution practical.
First, since the source is spatially non-smooth,
the integrand in \eqref{eq:uexactF} decays slowly as $|\kv|\to\infty$.
Second, the spectral form of the Green's function becomes more and more oscillatory with time, requiring a finer and finer discretization of the Fourier transform. Third, the representation is still dependent on the full space-time history of the source strengths $\sigma_j(t)$. 
In one and three dimensions, we showed how to overcome these obstacles
in \cite{wfp2025,tkwfp3d}.
In two dimensions, however, the lack of a strong Huygens' principle adds 
a significant complication. Waves linger within the domain for the
entire simulation time, and addressing this issue is one of the main technical 
contributions of the present work.

\subsection{Brief review of existing methods} \label{priorwork}
Fourier-based fast algorithms to overcome history-dependence have been 
developed for both the heat equation and the Schr\"{o}dinger equation
\cite{Greengard2000,greengard1990cpam,Kaye2022}. 
For the heat equation, the spectral representation of the Green's function
is rapidly decaying and non-oscillatory and the main difficulties involve
the short time behavior of volume and layer potentials.
The issues in the Schr\"{o}dinger case are closer, particularly in the need to 
avoid high frequency oscillations in the 
Fourier transform over long simulation times.
The algorithm of~\cite{Kaye2022} overcomes this 
by contour deformation of the Fourier transform into the complexified Fourier 
domain. Such a deformation does not appear to be feasible for the 
scalar wave equation.

In three dimensions,
the fast algorithm of \cite{tkwfp3d} 
consists of three key steps: 
(1) replacing the free space Green's function by a truncated Green's function
which is identical over the domain of interest,
(2) applying a smooth splitting of the solution into a time-local part,
evaluated directly, plus a smooth history part evaluated using Fourier methods,
and (3) using the non-uniform fast Fourier transform to deal with
irregularly spaced data. Critically, the
{\it spatial} truncation of the Green's function leads 
to {\it temporal} truncation as well, avoiding the oscillatory behavior of the 
wave kernel for long simulation times. 

Other methods to evaluate \eqref{eq:solnRep} include frequency-time hybrid (FTH) approaches, convolution quadrature (CQ) methods, and plane-wave-time-domain (PWTD) algorithms.
FTH methods approximate the inverse Fourier transform in time from 
a large set of \textit{independent} frequency-domain solutions, computed via
Helmholtz boundary integral solvers.
Anderson, Bruno, and Lyon,
for example, presented an FTH method~\cite{Anderson2020}
where they split the incident field into compactly supported time windows, from which they reconstruct the solution at any time.
One challenge in FTH methods is tackling the
resonance poles in the complex frequency plane, 
associated with trapped modes which dominate the late-time dynamics.
Since they lie arbitrarily near the real axis, such poles complicate the Fourier
inversion integral.
Wilber et al.\ \cite{Wilber2025} address this via an imaginary shift of the
contour in their fast sinc transform FTH algorithm.
Bruno and Santana~\cite{Bruno2025} present a FTH variant that subtracts off nearby poles
(handled using an asymptotic expansion) to leave a smoother inverse Fourier transform.
A tougher challenge is that
in the resonant case, iterative Helmholtz solvers have an iteration count
growing linearly (in 2D) with frequency \cite{marchand22}.

Convolution quadrature (CQ) methods
use quadrature approximations of convolutions performed in the Laplace transform plane
\cite{lubich94}. 
This requires a large set of independent solutions (usually found by boundary
integral methods) at complex Helmholtz frequencies,
thus is similar in spirit to FTH with contour deformation.
At least at low frequencies, CQ
methods reduce the time complexity from $\Oh{\Nt^2}$ to $\Oh{\Nt\log\Nt}$ or
$\Oh{\Nt\log^2\Nt}$.
See, for example, Monegato and Scuderi's method \cite{Monegato2013}, and Banjai, López-Fernández, and Schädle's Runge-Kutta-based convolution quadrature with oblivious quadrature~\cite{Banjai2016}. Such methods can, however, become inaccurate with low-regularity data, 
and suffer similar challenges as FTH with high frequencies and poles \cite{Betcke17}.

In plane-wave time domain (PWTD) methods, far-field
interactions are approximated using plane wave expansions, 
with sources and targets grouped hierarchically to accelerate translation 
and evaluation, in a similar style to the high-frequency fast multipole method \cite{rokhlin_2d}.
Asymptotically optimal schemes have been developed 
for the two-dimensional setting in \cite{Lu2000, Lu2004, Lu2004_2}.
They are optimal for both pure boundary value problems and for problems with adaptive 
volumetric discretizations, but the implementation is quite complex and 
the associated constants can be large.
Closer to our approach is the Fourier-based time-domain adaptive integral method of \cite{Yilmaz2004}, which exploits the convolution structure of the interaction {\it in space and time} (but does not try to evolve the spectral representation of the spatial Fourier transform as we do here).

More standard than any of the above, of course, is to compute
solutions of \eqref{eq:wave2d} using direct temporal and spatial 
discretization through finite difference (FDTD) or finite element methods, with radiation boundary conditions imposed at the computational boundary. Although exact radiation boundary conditions are non-local in space and time, many
local approximations of such conditions have been developed, such as those
of Engquist and Majda~\cite{Engquist1977}, Bayliss and Turkel~\cite{Bayliss1980}, and Higdon~\cite{Higdon1990}. Another approach to radiation conditions is the 
gradual addition of dissipative terms to the governing partial differential
equation, such as the perfectly matched layer approach of~\cite{Berenger1994} 
or the absorbing region method of~\cite{Israeli1981}. The double absorbing boundary (DAB) method of Hagstrom et al. in~\cite{DAB_Hagstrom2014} approximates the boundary
data corresponding to the free-space solution in a 
thin layer surrounding an artificial boundary by introducing auxiliary variables and 
solving a set of auxiliary PDEs in that layer. Complete radiation boundary conditions (CRBCs) due to Hagstrom and Warburton~\cite{CRBC1_Hagstrom2004, CRBC2_Hagstrom2009} avoid introducing this thin layer by replacing normal derivatives in the auxiliary PDEs with temporal derivatives and solving surface PDEs directly on the artificial (rectangular) boundary.
Both DABs and CRBCs can be coupled to standard discretizations in the 
interior of the rectangular domains. For circular boundaries,
an exact radiation condition is described in~\cite{Alpert2000, Alpert2002}, together with
a fast algorithm for computing the Dirichlet-to-Neumann map. 
Also worth noting are time-dependent ``phase space filters''
\cite{SOFFER2009, SOFFER2007, SofferStucchio}
and ``global discrete artificial boundary conditions''
\cite{tsynkov01}.
These permit the simulation of radiation boundary conditions with precision-dependent control.

\subsection{Outline of our approach}

Because of the lack of a strong Huygens' principle in 2D, we require 
more elaborate machinery than in the 3D setting, in order to handle the
wake left behind by sources in the distant past.
For this, we split the solution into  {\it three} parts:
a local component, a near-history component, and a far-history component:
\beq\label{eq:split1}
u(\xv,t) = \ul(\xv,t) + \unh(\xv,t) + \ufh(\xv,t). 
\eeq
The local part $\ul$ spans a time interval within a few time steps of the current time; $\unh$ spans a
time interval of the order of one passage time (the time required for a wave to traverse the
computational domain $B = [-1,1]^2$),
and $\ufh$ involves solution values from the temporal cut-off point of the near history all the way back to the initial time $t=0$.

To separate $\ul$ from the history part $\uh = \unh + \ufh$, we use the Windowed Fourier Projection (WFP) method, introduced in~\cite{wfp2025}.
Essentially an ``Ewald split'' for the wave equation,
this applies a blending function $\phi$
to partition the solution so that $\ul$ is non-smooth but local in time (and hence space),
while $\uh$ is nearly as smooth 
as the source signatures $\sigma_j(t)$ in \eqref{eq:sighat} allow.
The function $\phi$ is defined so that $\phi(t)=0$ for $t\leq 0$, and $\phi(t) = 1$ for $t\geq \delta$, where $\delta = \Oh\dt$. 
Adjusting the width $\delta$ of the blending function controls the spatial smoothness of the history part
or, more precisely, the bandwidth extension beyond the intrinsic bandwidth of the 
source functions in \eqref{eq:sighat}.
The larger $\delta$ is, the smaller the bandwidth extension and the faster the decay of the Fourier transform. 

While the WFP method enforces rapid decay of the Fourier transform, 
it does not control the oscillatory behavior (with respect to $\kv$)
of the spectral wave kernel as time increases.
We accomplish that here though the further split of the history part into
$\unh$, which has controlled oscillations (and thus can be discretized in $\kv$
out to around the signature bandwidth, as in \cite{wfp2025,tkwfp3d}),
plus $\ufh$, which is spatially much smoother, being a sum of the
weak Huygens' algebraic tails of $G$.
To represent $\ufh$ we again truncate $G$
in a way that has no effect within the computational domain $B$,
but now using a purely spatial blending function of width $\Oh{1}$.
As a result, $\ufh$ is discretized in the Fourier domain with a modest $\Oh{1}$ number of
quadrature points, {\it regardless of the signature bandwidth or final simulation time}.
However, unlike $\unh$ (as in \cite{wfp2025,tkwfp3d}),
each Fourier component of $\ufh$ no longer obeys a 2nd-order Duhamel relation,
necessitating a
temporal sum-of-poles approximation in which each term
does obey a Duhamel relation.

This paper is organized as follows. In Section~\ref{sec:prelim}, we introduce the tools needed
for the development of the 2D Truncated Kernel Windowed Fourier Projection (TK-WFP) algorithm. 
Section~\ref{sec:partition} covers the partition of the solution into local, near-history, and
far-history parts. We discuss the approximation and computation of each part in Sections~\ref{sec:near},~\ref{sec:far}, and~\ref{sec:local}, respectively. 
In Section~\ref{sec:results}, we verify the performance of the 2D TK-WFP algorithm using numerical examples featuring up to a million sources with frequency bandwidth corresponding to
300 wavelengths per side of the square domain.
We end with concluding remarks in Section~\ref{sec:conclusion}. 

\section{Components of the method}\label{sec:prelim}

Key to the development of our method is the smooth blending function $\phi$. 
We begin with its precise definition and a summary of its properties. We then
describe the truncated kernel splits,
and a sum-of-exponentials approximation of $1/\sqrt{t^2 - r^2}$,
which will be used for the rapid evaluation of the Fourier transform of the far history.

\subsection{Blending function}
For both temporal and spatial truncation we will use
the continuous blending function $\phi$ introduced in the original 1D WFP method~\cite{wfp2025}.
Given a width parameter $\delta>0$, this is defined as
\beq\label{eq:KBblending}
\phi_{\delta}(t) := 
\int_0^t\phi_{\delta}'(\tau)d\tau,
\quad \text{where}
\;
\phi_{\delta}'(t):= \begin{cases}\frac{b}{\delta\sinh b}I_0\left(b\sqrt{1 - (2t/\delta - 1)^2}\right), & 0\leq t\leq \delta, \\
  0, &
  \mbox{otherwise.}
\end{cases}
\eeq
Here, $I_0$ is the zeroth order modified Bessel function of the first kind and $\phi_{\delta}'$ can be viewed as a 
scaled and shifted Kaiser--Bessel bump function $I_0(b\sqrt{1 - t^2})$, $t\in[-1,1]$,
with unit $L^1$-norm.
Thus $\phi_\delta(t) = 0$ for $t\le 0$, while $\phi_\delta(t) = 1$ for $t\ge \delta$.
The parameter $b$ is a precision-dependent shape parameter which controls the bandwidth of $\phi_{\delta}$.
Given $\epsilon>0$, $\epsilon\ll 1$, and fixing $b = \ln(1/\epsilon)$,
the functions $\phi_{\delta}$, and $\phi_{\delta}'$ 
are \textit{numerically} smooth; 
that is, $\phi_{\delta}'(t)$ is smooth except for
jumps
of size $\Oh{e^{-b}}=\Oh\epsilon$ at $t = 0$ and $t = \delta$. The Fourier transform of 
$\phi_{\delta}'$ is available analytically, and is given by
\beq
\widehat{\phi_{\delta}'}(\omega) := \int_{\mathbb R} \phi_{\delta}'(t)e^{i\omega t}dt = \frac{be^{-i\delta\omega/2}}{\sinh b}\sinc\sqrt{\left(\frac{\delta\omega}{2}\right)^2 - b^2}, 
\eeq
where $\sinc\, z := \sin(z)/z$ for $z\neq 0$ and $1$ otherwise.
The bump function $\phi_\delta'$ is $\epsilon$-bandlimited to
$[-2b/\delta, 2b/\delta]$ in the sense that,
for any $\theta>1$,
\beq\label{eq:windowBandlimit}
\abs{\widehat{\phi_{\delta}'}(\omega)}< \frac{4b\theta\epsilon}{\delta\abs{\omega}},\quad
\text{for all }
\abs{\omega}\geq \frac{2b}{\delta\sqrt{1 - \theta^{-2}}}. 
\eeq
For the temporal (local-history) split,
the property~\eqref{eq:windowBandlimit}, proved in \cite[App.~A]{tkwfp3d},
will be needed to establish the rapid decay of the near-history
Fourier data beyond a cut-off wavenumber.
For the temporal blending from
local to near-history, and near-history to far-history,
the width will be small: $\delta = \Oh{\Delta t}$ (see Sec.~\ref{s:delta}).
However, for the spatial truncation of the far-history,
the width will be $\Oh{1}$, i.e., larger.

\subsection{Far history spatial truncation and temporal partition}
\label{s:split}

The far history component will use the
following {\it spatial} truncation of the 2D Green's function,
\beq\label{eq:TKdef}
\calG_A (\xv,t) = \phiD(A - \abs{\xv})G(\xv,t), \qquad \xv\in\mathbb R^2, t> 0,
\eeq
where $\phiD$ is defined as in \eqref{eq:KBblending}, but with a larger radial blending
width parameter $\Delta = \Oh{1}$.
Then $\calG_A$ vanishes for all $|\xv|>A$, while equalling the true $G$ \eqref{eq:Green}
throughout the ball $|\xv| \le A-\Delta$.
Thus by choosing $A - \Delta$ at least the largest distance
between any source and target in $B$, i.e. $A \ge 2\sqrt{2}+\Delta$,
the solution representation in
\eqref{eq:solnRep} for $\xv\in B$ is
unchanged by replacing $G$ by $\calG_A$.
For efficiency, we use the smallest such value,
\beq
A = 2\sqrt{2} + \Delta.
\label{A}
\eeq
By the Paley--Wiener theorem \cite{steinweissbook},
the spatial truncation of $\calG_A$ controls the oscillation rate of the integrand
{\it in the spatial Fourier domain},
so that using a Nyquist-spaced trapezoidal quadrature for the inverse
Fourier transform requires a 
$\kv$ grid spacing of only $\Oh{1}$ to achieve spectral accuracy, for all time.
Conversely, but distinctly from this,
the spatial blending width $\Delta$ will control the smoothness of $\ufh$ in space,
and thus the maximum $\kappa=|\kv|$ that is needed in this quadrature
to achieve an $\Oh{\epsilon}$ error.
\begin{remark}
The spatial blending function used in \eqref{eq:TKdef} will always be referred to as $\phiD$.
With a slight abuse of notation, from now on
we will drop the subscript in the (narrow) temporal
blending function \eqref{eq:KBblending} and denote $\phi_\delta$ simply by $\phi$.
\end{remark}

\begin{figure}[t]  
  \includegraphics[width=1.0\textwidth]{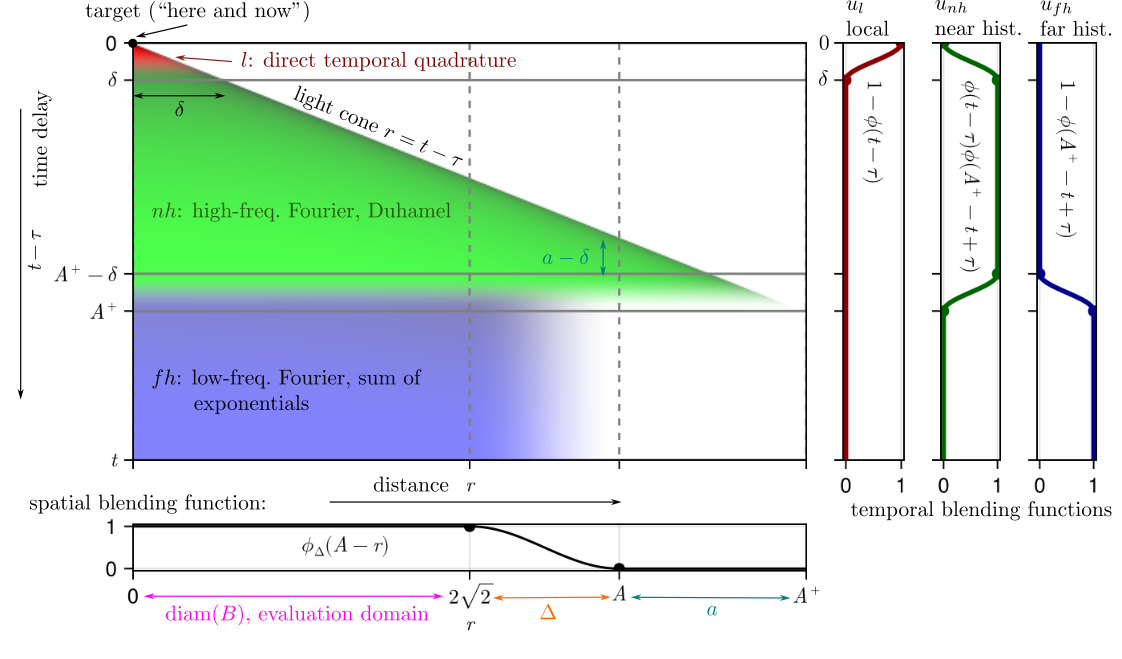} 
  \vspace{-5ex}
  \caption{Space-time diagram showing the influence at the target point $(\xv,t)$ of the
    three components in our method: local (shaded red) given by \eqref{eq:ul}, near history (green) given
    by \eqref{eq:unh}, and far history (blue) given by \eqref{eq:ufh}. (See online for color.)
    The color gradation indicates the blending (smooth multiplication) applied to the
    2D free-space Green's function $G(\xv-\yv,t-\tau)$,
    where $r = |\xv-\yv|$ is the distance from a
    source $\yv$, and $t-\tau$ is the time delay (increasing downwards into the past).
    White indicates zero.
    The darkness hints at the value of $G$, showing the $-1/2$ power singularity along the
    light cone.
    The time axis is shared with the three temporal blending functions to the right.
    The $r$ axis is shared with the spatial blending function for the far history only (bottom).
    The parameters $\delta$, $\Delta$, $a$, $A$ and $A^+$ are explained in Section~\ref{s:split}.
    Blending functions are shown with unrealistically small $b$ for better visualization.
  }
	\label{fig:partition} 
\end{figure}

We may now define the three components in the solution representation \eqref{eq:split1},
which uses the following $\delta$-scale temporal blending both to split the local from the
near-history (over $0<t-\tau<\delta$),
and also the near-history from the far-history (over $\Ap-\delta < t-\tau < \Ap$),
where the near-far split parameter is
\beq
\Ap = A + a
\label{Ap}
\eeq
for a parameter $a>0$ of size $\Oh{1}$ that
will enable an efficient sum-of-exponentials approximation of $\hat\calG_A(\kv,t)$.
The truncated kernel windowed Fourier projection (2D TK-WFP) method then
consists of using distinct fast algorithms for the evaluation of each solution component:
\begin{subequations}\label{eq:split}
\beq\label{eq:ul}
\ul(\xv,t) = \sum_{j = 1}^M\int_{t-\delta}^t G(\xv - \yv_j, t- \tau) \sigma_j(\tau)[1 - \phi(t - \tau)] d\tau,  
\eeq
\beq\label{eq:unh}
\unh(\xv,t) =  \sum_{j = 1}^M\int_{t-\Ap}^t G(\xv - \yv_j, t- \tau)  \sigma_j(\tau)\phi(t - \tau)\phi(\tau - t + \Ap) d\tau, 
\eeq
\beq\label{eq:ufh}
\ufh(\xv,t) =  \sum_{j = 1}^M\int_{0}^{t - \Ap + \delta} \calG_A(\xv - \yv_j, t- \tau)  \sigma_j(\tau)[1 - \phi(\tau - t + \Ap)] d\tau.
\eeq
\end{subequations}
Recall that, for all $\xv$ and $\yv_j$
in the solution domain $B$, $\calG_A$ is equivalent to $G$, so that \eqref{eq:ufh}
is valid.
Figure~\ref{fig:partition} illustrates the partition.

The local part is computed directly.
It spans a small number of time steps and requires
only a quadrature rule in time for each source-target pair with spatial separation $\le \delta$,
with special care taken when their separation is small compared to the time step.

The history parts, on the other hand, are computed by spatial inverse Fourier transforms:
since their Green's functions have strict $\Oh{1}$ radial supports (namely $\Ap$ for $\unh$ and
$A$ for $\ufh$), a fixed $\kv$ quadrature grid will be sufficient for both.
In 2D, the radial Fourier transform $F(\kappa)$
of a radial function $f(r)$ is the Hankel transform
\[
F(\kappa) = 2 \pi \int_0^\infty J_0(\kappa r) f(r) r \, dr,
\]
where $J_0$ is the zeroth-order Bessel function of the first kind.
Thus the Fourier transform of $\calG_A(\xv,t)$ is (recall $\kappa=|\kv|$),
\beq\label{eq:spectralTK}
\hat\calG_A(\kv,t) = \int_0^{\min(A,t)}\frac{rJ_0(\kappa r)}{\sqrt{t^2 - r^2}}\phiD(A - r)dr,
\quad t>0.
\eeq
By contrast, recall that the free-space kernel
has the Fourier transform
$$\hat G(\kv,t) = \int_0^t \frac{rJ_0(\kappa r)}{\sqrt{t^2 - r^2}}dr = \frac{\sin\kappa t}{\kappa},\quad t>0,
$$
and thus has unbounded oscillation rate in $\kappa$ at long times.
It is the truncation in the upper limit of integration in \eqref{eq:spectralTK} that allows the far
history to be well represented with a fixed $\kv$-grid for all time.

This use of the truncated kernel $\calG_A$ in $\ufh$, however, introduces a new difficulty.
Namely, while Euler's formula
$\sin \kappa t = (e^{i \kappa t} - e^{- i \kappa t})/2i$, 
leads to a simple Duhamel-type recurrence
for updating in time the spectral representation of $G$, and hence $\widehat{\unh}(\kv,t)$,
we will need a more elaborate 
method for efficiently updating $\widehat{\ufh}(\kv,t)$.

\subsection{Sum-of-exponentials approximation of the wave kernel}
\label{s:SOE}

In order to construct an efficient recurrence for the Fourier coefficients of the far history,
we start with the identity (see \cite[(6.611.4)]{GR8} with $\nu=0$)
\beq\label{eq:LaplaceIntegral}
\frac{1}{\sqrt{t^2-r^2}} = \int_0^{\infty} e^{-\lambda t} I_0(r\lambda)d\lambda,
\qquad t>r\ge 0,
\eeq
expressing the wave kernel (as in \eqref{eq:Green}) inside the light cone
as a Laplace transform.
We apply a composite quadrature to this integral,
using an $N_g$-node Gauss--Legendre rule in each of the $n$ ``panels'' (intervals)
\[
\left[0,\lambdam/2^n \right], 
\left[\lambdam/2^n,\lambdam/2^{n-1}\right], \left[\lambdam/2^{n-1},\lambdam/2^{n-2}\right], \dots, \left[\lambdam/2,\lambdam\right].
\]
We denote the entire set of resulting nodes and weights 
by $\lambda_l$ and $q_l$, respectively, indexed by $l = 1, \dots, \Nlam$, where $\Nlam = nN_g$.
The quadrature approximation is thus
\beq\label{eq:LaplaceApprox}
\frac{1}{\sqrt{t^2 -r^2}} = \sum_{l = 1}^{\Nlam}q_l I_0(r\lambda_l)e^{-\lambda_l t} + \Oh{\tilde\epsilon},
\qquad
r\in[0,A], \; t\in[\Ap-\delta,T],
\eeq
and we will show how to choose the three parameters $\lambdam$, $n$, and $N_g$ such that
this holds for a small desired tolerance $\tilde\epsilon$, uniformly over the
required $(r,t)$ domain.
Here the maximum radius needed is $A$
because this is the radial support of $\calG_A$ in \eqref{eq:TKdef}.
The $t$ parameter in \eqref{eq:LaplaceApprox} will be substituted by the time delay $t-\tau$
in the far history \eqref{eq:ufh}; this explains why the minimum time delay is
$\Ap-\delta$ while the maximum is the simulation end time $T$.

Firstly, 
the truncation parameter $\lambdam$ may be set by considering the
exponential decay rate of the integrand in \eqref{eq:LaplaceIntegral}.
Up to a weak algebraic factor, $I_0(r\lambda)\sim e^{r\lambda}$ as 
$\lambda\rightarrow\infty$,
so the integrand in \eqref{eq:LaplaceIntegral} behaves like
$e^{-\lambda(t - r)}\leq e^{-\lambda(\Ap - A)} = e^{-\lambda(a - \delta)}$, noting \eqref{Ap}.
(This minimum far-history separation $a-\delta$ from the light cone is shown
in Figure~\ref{fig:partition}.)
Since $\delta\ll1$, setting $a=1$ means that the truncation error is
of order $e^{-\lambdam}$, so that $\lambdam=36$ makes this close to double precision accuracy.

Secondly, the full range of decay rates must be accurately integrated, which is
guaranteed by the dyadically graded grid of quadrature panels. 
Consider the most rapidly-decaying case: for $r=0$ and $t=T$ the integrand
is $\sim e^{-\lambda T}$, so that the first panel $[0,\lambdam/2^n]$ must 
be of size $\Oh{1/T}$ to accurately integrate this. 
This demands that $n \approx \log_2 (\lambdam T) = \log_2 (36 T)$.
We see only logarithmic growth with respect to $T$. In practice we set $n=20$, allowing
$T$ up to about $3\times 10^4$, about 15,000 passage times across the domain.

Thirdly, one must set $N_g$, the number of nodes per panel.
Since the integrand is analytic (in fact entire), we have exponential convergence
in $N_g$ (see, e.g., \cite[Thm.~19.3]{ATAP}).
We find that $N_g=32$ is sufficient for close to double precision accuracy across the
desired $(r,t)$ domain in this work.
Thus the sum of exponentials has $N_\lambda = 640$ terms.
A rigorous bound of this dyadic quadrature scheme would be possible; we leave this
for future work. See \cite{Greengard2000} for a related scheme with analysis.

\section{Representations of the three solution components}\label{sec:partition}

Let us now revisit the smooth partition in \eqref{eq:split1} into local, near history, and far history 
parts, defined in~\eqref{eq:split} using the blending function $\phi$ in~\eqref{eq:KBblending},
the free-space wave kernel $G(\xv,t)$ from \eqref{eq:Green} and the truncated kernel
$\calG_A(\xv,t)$ in \eqref{eq:TKdef}.

The local part of the solution in \eqref{eq:ul} takes the form
\beq\label{eq:ul1}
\ul(\xv,t) = \frac{1}{2\pi}\sum_{j\in\calNd(\xv)}\int_{t - \delta}^{t - r_j}\frac{\sigma_j(\tau)[1 - \phi(t - \tau)]}{\sqrt{(t - \tau)^2 - r_j^2}}d\tau, 
\eeq
where
$r_j: = \abs{\xv - \yv_j}$, and $\calNd(\xv) = \{\ j \ | \ 0<r_j<\delta, \ j = 1, 2,\dots, M\}$ represents the indices of
non-coincident
sources within a ball of radius $\delta$ centered at $\xv\in B$. 
Recall from~\eqref{eq:ul1} that 
$\ul$ spans a time interval of only a few time steps, 
since $\delta = \Oh{\Delta t}$, and that  only a small number of sources are 
located within a ball of radius $\delta$ centered at any given target.
While the integral requires some care because of the near singularity for small $r$,
suitable quadrature rules for the local part can be precomputed for a fixed $\Delta t$
and stored in a sparse matrix that is valid for all time steps.
We defer a detailed discussion to Section~\ref{sec:local}. 

For time-stepping the history parts of the solution $\unh,\ufh$,
we turn to the Fourier domain. 
The near history \eqref{eq:unh} has the inverse Fourier transform
representation, by analogy with \eqref{eq:uexactF},
\beq\label{eq:uh_spectral}
\unh(\xv,t)  = \frac{1}{(2\pi)^2} \int_{\mathbb R^{2}} \alpha(\kv,t) e^{-i\kv\cdot \xv}d\kv, 
\eeq
with
\beq\label{eq:alphak}
\alpha(\kv,t) = \int_{t - \Ap}^{t}\frac{\sin \kappa (t - \tau)}{\kappa}
\phi(t - \tau)\phi(\tau - t + \Ap)\sighat(\kv,\tau)\, d\tau, 
\eeq 
recalling the source spatial Fourier transform $S(\kv,\tau)$ defined in \eqref{eq:sighat}.
Here, the zero-frequency term $\alpha({\bf 0},t)$ is taken in the sense of the limit
$\kappa\to0$.
Because \eqref{eq:alphak} is its temporal convolution with a blended sine
function, a 2nd-order Duhamel time-step update for $\alpha$ is possible,
driven only by $S$ in the two narrow transition intervals,
just as in our 3D method \cite{tkwfp3d}.
We give the details and the resulting numerical approximation of $\unh$ in Section~\ref{sec:near}.

We similarly express $\ufh$ in \eqref{eq:ufh} as the inverse Fourier transform
\beq\label{eq:ufh2}
\ufh(\xv,t) = \frac{1}{(2\pi)^2} \int_{\mathbb{R}^2}\alpha_F(\kv,t)e^{-i\kv\cdot\xv} d\kv,
\eeq
where, using the Fourier transform of the radially-truncated Green's function, $\hat\calG_A(\kv,t)$
in \eqref{eq:spectralTK},
\beq
\begin{split}
  \alphaF(\kv,t) = &
  \int_0^{t - \Ap + \delta} \hat\calG_A(\kv,t-\tau) [1 - \phi(\tau - t + \Ap)]
  \sighat(\kv,\tau) \,  d\tau
  \\
  = & \int_0^{t - \Ap + \delta}  [1 - \phi(\tau - t + \Ap)] \sighat(\kv,\tau)
\int_0^A\frac{rJ_0(\kappa r)\phi_{\Delta}(A - r)}{\sqrt{(t - \tau)^2 - r^2}}
dr \, d\tau.
  \label{eq:alphaF}
\end{split}
\eeq
Note that the upper limit of the inner integral in \eqref{eq:alphaF} is
$A$ rather than $\min(A,t - \tau)$ as in \eqref{eq:spectralTK};
this follows since $t-\tau \ge \Ap-\delta > A$ for the far history.
The inner integral is the Hankel transform on $r\in[0, A]$
of the function $\phi_{\Delta}(A - r)/\sqrt{(t - \tau)^2 - r^2}$, at fixed delay $t - \tau\in[\Ap-\delta,t]$.
Recalling \eqref{Ap}, $t-\tau \ge a-\delta = \Oh{1}$, so that the denominator is smooth
on an $\Oh{1}$ radial scale; the same is true for the numerator because
the blending function has width $\Delta=\Oh{1}$.
Thus the Hankel transform decays rapidly in $\kappa$,
and may be truncated with close to machine precision error at a
moderate maximum $\kappa$ that is {\it independent}
of the wave frequency (signature bandwidth).
We demonstrate this, and give further details on the 
approximation of $\ufh$ and the computation of $\alpha_F(\kv,t)$, in Section~\ref{sec:far}. 

\section{Evaluation of the near history}\label{sec:near}

Suppose now that $\alpha(\kv,t)$ is available, and
we discretize the Fourier representation of the near history
part in \eqref{eq:uh_spectral} using the (infinite) tensor product trapezoidal rule with 
grid spacing $\dk$:
\beq\label{eq:uhtrap}
\unh(\xv,t) \approx \left(\frac{\dk}{2\pi}\right)^2 \sum_{\nv \in\mathbb Z^2}\alpha(\nv\dk,t)e^{-i\nv\dk\cdot \xv}, \qquad \xv \in B,
\eeq
which may be interpreted as a Fourier {\it series} with spatial period $2\pi/\dk$.
Remarkably, for $\dk\leq 2\pi/(\Ap + 2)$ this expression 
is {\it exact}.
This follows from the Poisson summation formula \cite[Ch.~VII, Cor.~2.6]{steinweissbook},
\beq \label{PSFerror}
\unh(\xv,t) -
\left(\frac{\dk}{2\pi}\right)^2\sum_{\nv \in\mathbb Z^2} \alpha(\nv\dk,t)e^{-i\nv\dk\cdot \xv} 
= - \sum_{\mv\in\mathbb Z^2\backslash \{0\}} \unh\left(\xv + \frac{2\pi}{\dk} \mv,t\right),
\eeq
which expresses the quadrature error (left side) as a sum over a punctured lattice
of images with spacing $2\pi/\dk$ (right side).
From the definition \eqref{eq:unh} of $\unh$ and the unit propagation
speed, the spatial support of $\unh$ lies within $(-\Ap - 1,\Ap + 1)^2$.
If the lattice spacing is large enough, no translation of this support can fall within $B$.
Figure~\ref{fig:imagelatt} explains this geometric constraint.
See \cite[Prop.~3.1]{tkwfp3d} for a formal proof in the 3D case.
For exact quadrature,
highest efficiency results at the largest Fourier grid spacing, namely
\beq\label{eq:dk}
\dk = \frac{2\pi}{\Ap + 2},
\eeq
which will be around $0.92$ in our numerical tests in Section~\ref{sec:results}.

\begin{figure}[t]  
    \centering
  \includegraphics[width=0.8\textwidth]{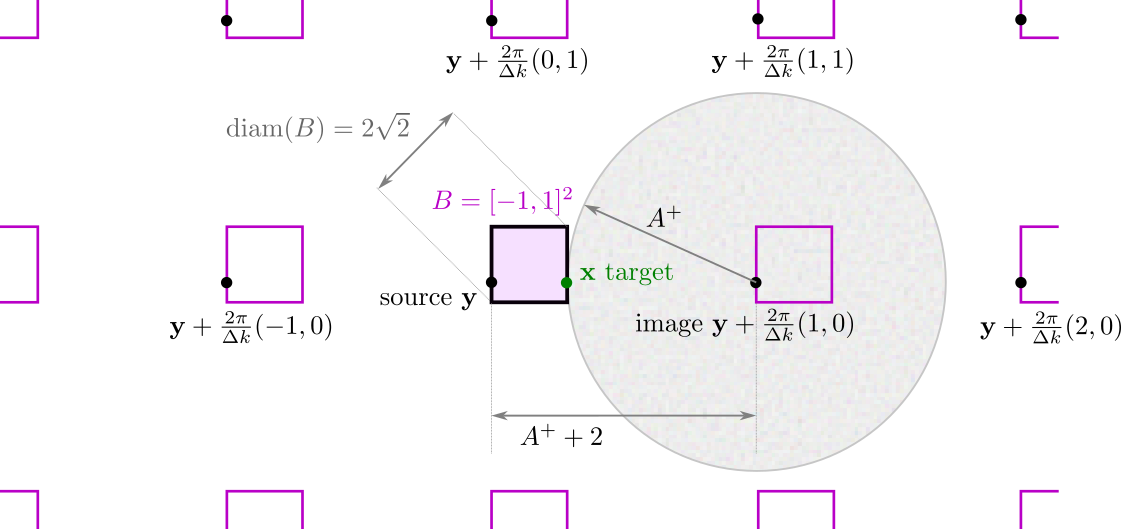}
  \vspace{-1ex}
  \caption{The geometric constraint on the maximum $\Delta k$ grid size for which the
    trapezoid rule (equispaced infinite grid) quadrature is exact for the inverse Fourier transform
    evaluation of the near history $\unh$; see Sec.~\ref{sec:near}.
    By the Poisson summation formula, quadrature (aliasing) creates
    an infinite lattice of sources, of which all but the central one are erroneous.
    $\yv$ is such a source on the boundary of the computational domain $B$.
    By Huygens' principle, the near history lives in an open disk of radius $\Ap$ centered at
    each image source. The target $\xv\in B$ first touched by the nearest image disk is shown.
    For no aliasing at any targets in $B$, the image separation must thus be
    at least $\Ap+2$. The ratio in the diagram is shown accurately: $\Ap \approx 4.8$ in our
  tests.}
	\label{fig:imagelatt} 
\end{figure}

\subsection{Fourier quadrature truncation}
\label{sec:cut-off}

We truncate the infinite series in \eqref{eq:uhtrap} at a cut-off wavenumber magnitude
$K$ such that,  given an error tolerance $\epsilon$, the error is $\Oh{\epsilon}$.
Thus, the near history approximation takes the form
\beq\label{eq:uhtraptrunc}
\unh(\xv,t) \approx \left(\frac{\dk}{2\pi}\right)^2 \sum_{\nv\in\mathbb Z^2: \, \abs{\nv\dk}\leq K}
\!\!\!
\alpha(\nv\dk,t)\, e^{-i\nv\dk\cdot \xv},
\qquad \xv \in B. 
\eeq
To determine $K$,
we use the following {\it dimension-independent} theorem, proved in \cite{tkwfp3d}.
In short, it states that the $\alpha$ coefficients are small beyond a wavenumber magnitude
$K$ that is roughly the sum of the signature frequency cut-off $K_0$ and the
blending window frequency cut-off $2b/\delta$.
\begin{theorem}\label{thm:alphaDecay}
  Let $\sigma_j(t)\in L_2(\mathbb R_+)$,
  with $\|\sigma_j\|_1\le P$,
  $j=1,\dots,M$, be given source signature functions.
  Let $\epsilon$ denote the desired precision, 
with  $0<\epsilon < 1$, and let 
  the bandlimit $K_0$ be chosen so that
  the Fourier transforms $\hat{\sigma}_j(\omega)$ satisfy the decay estimate
\beq
\abs{\hat{\sigma}_j(\omega)} \leq \frac{\epsilon}{\omega^2},
\quad \text{for all }|\omega|>K_0.
\label{sighatdecay}
\eeq
Let the blending timescale be $\delta>0$,
and let $\phi$ be defined as in \eqref{eq:KBblending} 
with $b = \ln(1/\epsilon)$. Then, for each $\theta>1$,
the Fourier transform data defined by \eqref{eq:alphak} obey the decay condition
\beq
\abs{\alpha(\kv,t)} = \frac{C M \epsilon}{\kappa^3},
\qquad \text{for all }\kappa>K := K_0 + \frac{2b}{\delta\sqrt{1 - 1/\theta^2}},
\; t\in[0,T],
\label{albnd}
\eeq 
recalling that $\kappa:=|\kv|$.
Here $C$ is a constant independent of $\epsilon$,
that depends only weakly on $K_0$, $b$, $\delta$, $\theta$, and $P$.
\end{theorem}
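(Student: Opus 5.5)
Since $\alpha$ is linear in $\sighat$ and $\sighat(\kv,\tau)=\sum_j \sigma_j(\tau)e^{i\kv\cdot\yv_j}$ with $|e^{i\kv\cdot\yv_j}|=1$, it suffices to bound, for a single signature $\sigma=\sigma_j$,
\[
\beta(\kappa,t) := \int_{t-\Ap}^{t}\frac{\sin\kappa(t-\tau)}{\kappa}\,\psi(t-\tau)\,\sigma(\tau)\,d\tau,
\qquad \psi(s):=\phi(s)\phi(\Ap-s).
\]
The triangle inequality then gives the factor $M$, so I will aim for $|\beta|\le C\epsilon/\kappa^3$. Writing $\sin\kappa s=(e^{i\kappa s}-e^{-i\kappa s})/2i$, it is enough to bound the two one-sided transforms $\int \psi(s)e^{\pm i\kappa s}\sigma(t-s)\,ds$ by $C\epsilon/\kappa^2$.

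First I would move to the temporal frequency domain. By Parseval/convolution, $\int\psi(s)e^{i\kappa s}\sigma(t-s)\,ds=\frac{1}{2\pi}\int \hat\sigma(\omega)e^{-i\omega t}\,\hat\psi(\kappa+\omega)\,d\omega$, up to sign conventions. To expose the windows, I use $\psi'(s)=\phi'(s)\phi(\Ap-s)-\phi(s)\phi'(\Ap-s)$, which on the support equals $\phi'(s)-\phi'(\Ap-s)$ because the two transition intervals $[0,\delta]$ and $[\Ap-\delta,\Ap]$ are disjoint. Hence $\hat\psi(\xi)=\frac{i}{\xi}\widehat{\psi'}(\xi)$ with $\widehat{\psi'}(\xi)=\widehat{\phi'}(\xi)-e^{i\xi\Ap}\overline{\widehat{\phi'}(\xi)}$ (as appropriate). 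Consequently $|\hat\psi(\xi)|\le 2|\widehat{\phi'}(\xi)|/|\xi|$. This is bounded by $2/|\xi|$ everywhere, since $\|\phi'\|_1=1$, and by $8b\theta\epsilon/(\delta\xi^2)$ once $|\xi|\ge 2b/(\delta\sqrt{1-\theta^{-2}})=:W$, by \eqref{eq:windowBandlimit}. I will also use the trivial bound $|\hat\psi|\le\Ap$.

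Next, fix $\kappa>K=K_0+W$ and split the $\omega$ integral. In region (i), $|\omega|\le K_0$, we have $|\kappa+\omega|\ge\kappa-K_0>W$. There $|\hat\sigma|\le\|\sigma\|_1\le P$ and $|\hat\psi(\kappa+\omega)|\le 8b\theta\epsilon/(\delta(\kappa-K_0)^2)$, so the contribution is at most $CPK_0 b\epsilon/(\delta(\kappa-K_0)^2)$. In region (ii), $|\omega|>K_0$, we have $|\hat\sigma(\omega)|\le\epsilon/\omega^2$. I would subdivide further according to whether $|\kappa+\omega|\ge W$, where both factors are small, or $|\kappa+\omega|<W$, where $\omega\approx-\kappa$ so $|\hat\sigma|\le\epsilon/(\kappa-W)^2$ and $|\hat\psi|\le\Ap$ over an interval of length $2W$. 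Both pieces are $O(\epsilon/\kappa^2)$ with constants depending on $W,K_0,\Ap,P$. Dividing by the prefactor $\kappa$ in $\sin(\kappa s)/\kappa$ gives $\kappa^{-3}$.

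The main obstacle is converting the $(\kappa-K_0)^{-2}$ and $(\kappa-W)^{-2}$ factors into a clean $\kappa^{-2}$ uniformly for $\kappa>K$; these ratios blow up only as $\kappa\downarrow K$, where they are bounded by $(K/W)^2$ and $(K/K_0)^2$. That is why $C$ depends weakly on $K_0,b,\delta,\theta$ rather than being absolute. Two further points need care. One is uniformity in $t\in[0,T]$: this holds because the phase $e^{-i\omega t}$ has modulus one and $\sigma$ vanishes for negative times, so the truncation at $\tau=0$ is automatic. The other is ensuring the tail integrals $\int_{|\omega|>K_0}\epsilon\,\omega^{-2}\,|\hat\psi|\,d\omega$ converge, which follows from the $2/|\xi|$ bound together with $\omega^{-2}$ integrability.
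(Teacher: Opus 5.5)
Your proof is correct, and it is consistent with the only indication the paper gives. The paper defers the proof to \cite{tkwfp3d} and remarks only that it rests on the window estimate \eqref{eq:windowBandlimit}. That estimate enters exactly as in your argument: you pass to the temporal Fourier domain, write $\hat\psi(\xi)=\frac{i}{\xi}\widehat{\psi'}(\xi)$ in terms of $\widehat{\phi'}$, and split at $|\omega|=K_0$ so that either the window or $\hat\sigma_j$ supplies the factor $\epsilon$.

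Three small gaps, all harmless. First, you do not justify the $\kappa^{-2}$ bound on the piece where $|\omega|>K_0$ and $|\kappa+\omega|\ge W$ (your $W$). It follows from $\max(|\omega|,|\kappa+\omega|)\ge\kappa/2$. Second, you tacitly use $\Ap\ge 2\delta$, which holds here since $\delta=\Oh{\dt}$. Third, your constant $C$ depends on $\Ap$, which the statement does not list.
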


\begin{remark}
  Its proof uses 
the decay estimate for $\hat{\phi'}$ in \eqref{eq:windowBandlimit}.
The estimate~\eqref{sighatdecay} is clearly satisfied
for $\sigma_j$ twice-differentiable, with even faster decay if
$\sigma_j$ is smoother.
\end{remark}

Now using that fact that $1/|\kv|^3$ is summable over the $\kv$ lattice in 2D,
and rolling in all constants, this theorem shows that the near history Fourier sum
truncation error satisfies
$$\sum_{\nv\in\mathbb Z^2:\abs{n\dk}>K}\alpha(\nv\dk,t)e^{-i\nv\dk\cdot\xv} = \Oh{\epsilon}.
$$
In practice we find it adequate to take the large-$\theta$ limit from the theorem,
and use simply set the cut-off to be the sum of the two bandwidths,
\beq\label{eq:K}
K = K_0 + \frac{2b}{\delta}.
\eeq

\subsection{The temporal blending timescale and the Duhamel update}
\label{s:delta}

With the choice of cut-off wavenumber $K$ in~\eqref{eq:K}, it remains to set the blending 
timescale $\delta$ and the time-step $\dt$.
We will set
\beq\label{eq:W}
\delta = W\dt,
\eeq
where $W$ is a small positive integer that can be used to balance the local and history costs.
For the Fourier coefficients $\alpha$ to be accurately resolved
in time requires that $\dt \lesssim \pi/K$, the Nyquist limit.
Then combining this with \eqref{eq:K} and \eqref{eq:W}
gives
\beq
\dt \lesssim \frac{\pi - 2b/W}{K_0},
\label{dt}
\eeq
which is necessarily somewhat less than the Nyquist limit $\pi/K_0$ sufficient to
resolve the signature functions $\sigma_j$ alone.
Increasing $W$ grows $\delta$ and thus the local cost, while reducing the near-history
cost.
For instance, for 8-digit accuracy ($\epsilon=10^{-8}$), 
$W = 24$ causes $K$ to be around twice the signal bandwidth $K_0$, hence
$\dt$ to be around half the Nyquist limit for the signature functions.
We typically choose $W$ in the range $10$-$30$.

The Fourier data $\alpha(\kv,t)$ defined by \eqref{eq:alphak},
while formally history-dependent, satisfy a simple 2-term recurrence relation
independently at each $\kv$.
\begin{lem}
  Let $\kv\in\R^2$.
  The exact evolution
  over one time step $\dt$ for the pair
  $\alpha(\kv,t)$ and $\dot \alpha(\kv,t) := \partial_t \alpha(\kv,t)$ is
\beq\label{eq:alphakEvolution}
\begin{split}
\alpha(\kv,t + \dt) &= \alpha(\kv,t)\cos(\kappa\dt) + \dot \alpha(\kv,t) \frac{\sin\kappa\dt}{\kappa} + h(\kv,t), \\
\dot\alpha(\kv,t + \dt) &= -\kappa\alpha(\kv,t)\sin(\kappa\dt) + \dot \alpha(\kv,t) \cos\kappa\dt+ g(\kv,t), \\
\end{split}
\eeq
where
\beq\label{eq:ghEquations}
\begin{split}
h(\kv,t) &:= \int_t^{t+\dt} \frac{\sin\kappa(t+\dt - \tau)}{\kappa} F(\kv,\tau)d\tau,
\;\; \\
g(\kv,t) &:= \int_t^{t + \dt}
\!\!\!\!\!
\cos\kappa(t + \dt - \tau) F(\kv,\tau)d\tau,
\end{split}
\eeq
and 
\beq\label{eq:alphakF}
F(\kv,t) = \int_{t - \delta}^{t}\left[ \Psi(\kv,t - \tau)\sighat(\kv,\tau) - \Psi_{\Ap}(\kv,t - \tau)\sighat(\kv,\tau - \Ap + \delta)\right]d\tau. 
\eeq
Here, $\Psi$ and $\Psi_{\Ap}$ are supported in $[0,\delta]$, and given by
\beq
\begin{split}
\Psi(\kv,\tau)&:= 2\cos\kappa\tau \, \phi'(\tau) + \frac{\sin\kappa\tau}{\kappa}\phi''(\tau), \\
\Psi_{\Ap}(\kv,\tau)&:= 2\cos\kappa(\tau + \Ap - \delta) \, \phi'(\tau) + \frac{\sin\kappa(\tau + \Ap-\delta)}{\kappa}\phi''(\tau).
\end{split}
\eeq
\end{lem}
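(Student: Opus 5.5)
The plan is to show that $\alpha(\kv,\cdot)$ satisfies the forced harmonic-oscillator equation
\[
\ddot\alpha(\kv,t)+\kappa^2\alpha(\kv,t)=F(\kv,t).
\]
The stated recurrence \eqref{eq:alphakEvolution}--\eqref{eq:ghEquations} is then the exact variation-of-constants (Duhamel) solution of this ODE over one step $[t,t+\dt]$. The propagator matrix for $(\alpha,\dot\alpha)$ has entries $\cos\kappa\dt$, $\sin(\kappa\dt)/\kappa$, $-\kappa\sin\kappa\dt$, $\cos\kappa\dt$. The inhomogeneous parts are exactly $h$ and $g$, being the convolutions of $F$ with $\sin\kappa(t+\dt-\tau)/\kappa$ and with its time derivative $\cos\kappa(t+\dt-\tau)$. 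This last step is routine.

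To derive the ODE, write \eqref{eq:alphak} as a causal convolution
\[
\alpha(\kv,t)=\int_{-\infty}^t K(t-\tau)\,\sighat(\kv,\tau)\,d\tau,
\qquad K(s):=\frac{\sin\kappa s}{\kappa}\,w(s),
\qquad w(s):=\phi(s)\phi(\Ap-s),
\]
where $w$ is supported in $[0,\Ap]$. Differentiating twice in $t$ gives $\ddot\alpha=K(0)\dot{\sighat}+K'(0)\sighat+\int K''(t-\tau)\sighat\,d\tau$. Since $\phi(0)=0$, we have $w(0)=0$, so $K(0)=0$ and $K'(0)=\cos(0)\,w(0)=0$, and both boundary terms vanish. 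Therefore $\ddot\alpha+\kappa^2\alpha=\int (K''+\kappa^2K)(t-\tau)\sighat\,d\tau$. Because $\sin(\kappa s)/\kappa$ solves the homogeneous equation, the product rule gives
\[
K''+\kappa^2 K=2\cos(\kappa s)\,w'(s)+\frac{\sin\kappa s}{\kappa}\,w''(s).
\]
This is supported only where $w$ is non-constant, namely on $[0,\delta]$ and $[\Ap-\delta,\Ap]$.

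On $[0,\delta]$ we have $\phi(\Ap-s)=1$, so $w'=\phi'$ and $w''=\phi''$, and the forcing kernel is exactly $\Psi(\kv,s)$. This produces the first term of \eqref{eq:alphakF} with $s=t-\tau\in[0,\delta]$. On $[\Ap-\delta,\Ap]$ we have $\phi(s)=1$, so $w'(s)=-\phi'(\Ap-s)$ and $w''(s)=\phi''(\Ap-s)$. Substitute $s=\rho+\Ap-\delta$ with $\rho\in[0,\delta]$, so that $\Ap-s=\delta-\rho$. Then use the symmetry of the Kaiser--Bessel bump about $\delta/2$, which is evident from \eqref{eq:KBblending}: $\phi'(\delta-\rho)=\phi'(\rho)$, and hence $\phi''(\delta-\rho)=-\phi''(\rho)$. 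The forcing kernel becomes $-\Psi_{\Ap}(\kv,\rho)$. The corresponding source time is $t-s=(t-\rho)-\Ap+\delta$. Renaming $t-\rho$ as the integration variable in $[t-\delta,t]$ yields the second term of \eqref{eq:alphakF}.

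I expect the main obstacle to be regularity. The Kaiser--Bessel derivative $\phi'$ has tiny jumps of size $\Oh\epsilon$ at $0$ and $\delta$, so $w''$, and hence $\phi''$, must be understood distributionally, including Dirac masses at the endpoints. I will state that $\phi''$ is interpreted this way, with $\Psi$ and $\Psi_{\Ap}$ supported on the closed interval $[0,\delta]$. Then every step above holds verbatim as an identity of distributions in $t$, and the boundary-term argument still works because $w$ itself is continuous with $w(0)=0$. In practice these Dirac contributions are $\Oh\epsilon$ and may be dropped.

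I will also check the case $\kappa\to0$ by continuity, since all expressions are entire in $\kappa$. Finally, I will confirm the sign convention on $\Psi_{\Ap}$ against the symmetry identity, since that is where a sign error is most likely.
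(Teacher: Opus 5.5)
Your proposal is correct and follows the same route as the paper. In both, you show that $\alpha$ satisfies the forced oscillator equation $\ddot\alpha+\kappa^2\alpha=F$ and obtain the one-step recurrence from Duhamel's formula. You supply the details the paper calls straightforward: the vanishing boundary terms $K(0)=K'(0)=0$, and the reflection symmetry $\phi'(\delta-\rho)=\phi'(\rho)$ needed to put the second transition term into the stated form of $\Psi_{\Ap}$. You also give the distributional reading of $\phi''$, with point masses of size $\phi'(0^+)$ at $0$ and $\delta$; without it, the claimed exactness would hold only up to tiny jump contributions.
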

\begin{proof}
It is straightforward to see that $\alpha(\kv,t)$ satisfies the ODE 
\beq
\begin{cases}
\ddot\alpha(\kv,t) + \kappa^2\alpha(\kv,t) = F(\kv,t), & t>0, \\
\alpha(\kv,0) = \dot\alpha(\kv,0) = 0, 
\end{cases}
\eeq
whose solution using the Duhamel principle is
\beq\label{eq:alphaDuhamel}
\alpha(\kv,t) = \int_0^t\frac{\sin\kappa(t - \tau)}{\kappa}F(\kv,\tau)d\tau,
\eeq
leading directly to~\eqref{eq:alphakEvolution}. 
\qed
\end{proof}
\begin{remark}
At the origin $\kv = \bold 0$, we take the limit $\kappa\rightarrow 0$ with $\sin\kappa \tau/\kappa\rightarrow \tau$. 
\end{remark}

Following the treatment in~\cite{tkwfp3d} and~\cite{wfp2025},
we use Gauss--Legendre quadrature over the interval $[t,t+\dt]$
to compute $h(\kv,t)$ and $g(\kv,t)$ in \eqref{eq:ghEquations}.
The function $F(\kv,t)$ in \eqref{eq:alphakF} is computed using the trapezoidal rule on the existing
time-stepping grid. Merging \eqref{eq:ghEquations} and \eqref{eq:alphakF}, and changing  
the order of integration leads to a formula of the form
\beq
\begin{aligned}
  h(\kv,t) &\approx \dt \sum_{m=0}^{W-1} p_m(\kv) \sighat(\kv,t-m\dt)
  - p^{(\Ap)}_m(\kv) \sighat(\kv,t-\Ap+\delta-m\dt),
  \\
  g(\kv,t) &\approx \dt \sum_{m=0}^{W-1} q_m(\kv) \sighat(\kv,t-m\dt)
  - q^{(\Ap)}_m(\kv) \sighat(\kv,t-\Ap+\delta-m\dt),
\end{aligned}
\label{hggrid}
\eeq
where $p_m$, $q_m$, $p^{(\Ap)}_m$, and $q^{(\Ap)}_m$ are independent of $t$ and 
can be precomputed for each $\kv$ in the Fourier quadrature grid.
We refer to~\cite{tkwfp3d} for further details.

\subsection{Computational complexity and storage}

At each time step, the evaluation of $\alpha(\kv,t)$ using~\eqref{eq:alphakEvolution}
requires the computation of $h(\kv,t)$ and $g(\kv,t)$ in \eqref{hggrid}. This requires
two calls to a (type I) 
non-uniform fast Fourier transform (NUFFT) to obtain $\sighat(\kv,t)$ and 
$\sighat(\kv,t - \Ap)$ at $W$ previous stages
\cite{finufft,finufftlib}.
This is done for each of the $N^2$ wave-vectors $\kv$ where $N = \ceil{2K/\dk}$. 
The total cost of these type I transforms is $\Oh{\log^2(1/\epsilon)M + N^2\log N}$,
recalling that $M$ is the number of sources.

Once all $\alpha(\kv,t)$ are known at a given time $t$, we apply a type II NUFFT 
to evaluate $\unh$ from~\eqref{eq:uhtraptrunc}, at a cost of
$\Oh{\log^2(1/\epsilon)\Nx + N^2\log N}$, where $\Nx$ is the number of target points. 
The algorithm requires access to the values $\sighat(\kv,t)$ and
$\sighat(\kv,t - \Ap+\delta)$ at $W$ time levels prior to $t - \Ap+\delta$ and $t$,
incurring a storage cost of $\Oh{NM}$ complex numbers, since
the near-history involves $\Ap/\dt = \Oh{K} = \Oh{N}$ time steps,
while $\sighat$ may be recomputed as needed via NUFFTs from the $M$ signatures $\sigma_j$
on the time grid.
\begin{remark}
  \label{r:dt0}
In the limit $\dt \rightarrow 0$, it follows from the preceding analysis
that $\delta \rightarrow 0$ as well, if $W$ is fixed in \eqref{eq:W}.
This would require that $K \rightarrow \infty$ and $N \rightarrow \infty$ 
to account for 
the sharp transition from the the local part to the near history in the frequency 
domain. One could 
avoid this by fixing $\delta =\Oh{1/K_0}$, which would force $W$ to grow,
putting an increased burden on the evaluation of the local part instead.
We have not made this modification to our algorithm.
In practice, since ours is a spectral scheme,
for efficiency one should always choose $\dt$ around $1/K_0$,
since the solution is temporally resolved on that time scale.
\end{remark}

\section{Evaluation of the far history}\label{sec:far}

The contribution of the far history is also computed in the Fourier transform domain;
recall \eqref{eq:ufh2} and \eqref{eq:alphaF}.
In order to efficiently evaluate the values $\alphaF(\kv,t)$,
we insert the sum-of-exponentials approximation \eqref{eq:LaplaceApprox}
into \eqref{eq:alphaF}.
Given $0 < \tilde\epsilon \ll 1$, and the $\Nlam$ quadrature nodes $\lambda_l$ and weights $q_l$, $l = 1, \dots, \Nlam$, after a little algebra, we get
\beq\label{eq:alphaFApprox}
\alphaF(\kv,t) =  \sum_{l = 1}^{N_\lambda}\calH_l(\kappa;A)\beta_l(\kv,t;\Ap) + \Oh{\tilde\epsilon} 
\eeq
where $\kappa = \abs{\kv}$, the radial Hankel transform coefficients are
\beq\label{eq:H}
\calH_l(\kappa;A) := q_l e^{-A\lambda_l}
\int_0^A J_0(\kappa r)I_0(\lambda_lr)
\phi_{\Delta}(A - r) rdr, 
\eeq
and all time-dependence is in the coefficients
\beq\label{eq:beta0}
\beta_l(\kv,t;\Ap) := e^{A\lambda_l}\int_{0}^{t - \Ap + \delta}e^{-\lambda_l(t - \tau)}\sighat(\kv,\tau)[1 - \phi(\tau - t + \Ap)]d\tau.
\eeq
The coefficients $\calH_l(\kappa;A)$ in \eqref{eq:H} do not depend on time, and 
involve smooth integrands over a bounded interval;
we precompute them using Gauss--Legendre quadrature
for each $l = 1, \dots, \Nlam$, at each $\kappa$.
(Note that we incorporate the factor
$e^{-A\lambda_l}$ in $\calH_l(\kappa;A)$ to compensate for
the exponential growth of $I_0(\lambda_lr)$ as $\lambda_l$ increases,
so that $\calH_l(\kappa;A) = \Oh{1}$ up to weak algebraic factors.)

For the time-dependent coefficients $\beta_l(\kv,t;\Ap)$, we apply the partition
\beq\label{eq:beta}
\beta_l(\kv,t;\Ap)=
e^{A\lambda_l} \left[\beta_l^{(1)}(\kv,t;\Ap) + \beta_l^{(2)}(\kv,t;\Ap)\right], 
\eeq
where $\beta_l^{(1)}$ covers the bulk of the far-history and $\beta_l^{(2)}$ just the transition region:
\beq
\begin{split}
 \beta_l^{(1)}(\kv,t;\Ap)&:=\int_{0}^{t - \Ap}e^{-\lambda_l(t - \tau)}\sighat(\kv,\tau)d\tau, \\
\text{and} \qquad   
\beta_l^{(2)}(\kv,t;\Ap)&:= \int_{t - \Ap}^{t - \Ap + \delta}e^{-\lambda_l(t - \tau)}\sighat(\kv,\tau)[1 - \phi(\tau - t + \Ap)]d\tau. 
\end{split}
\eeq
The factor $[1 - \phi(\tau - t + \Ap)] = 1$ for $\tau \in [0,t - \Ap]$, and is thus omitted
from the expression for $\beta_l^{(1)}(\kv,t;\Ap)$. 
Due to the exponential time kernel,
the coefficients $\beta_l^{(1)}$ can be computed at each time step using the
Duhamel recurrence
\beq\label{eq:betaEvolution}
\beta_l^{(1)}(\kv,t + \dt; \Ap) = e^{-\lambda_l\dt}\left[\beta_l^{(1)}(\kv,t;\Ap) + \int_{t - \Ap}^{t - \Ap + \dt}e^{-\lambda_l (t - \tau)}\sighat(\kv,\tau)d\tau\right] 
\eeq
for each $l = 1, \dots, \Nlam$. The integral in~\eqref{eq:betaEvolution} is smooth and
evaluated using Gauss--Legendre quadrature. 
The coefficients, $\beta_l^{(2)}$ in \eqref{eq:beta} 
involve only a few time steps (recalling that $\delta = W\dt$) and
can be computed directly using Gauss--Legendre quadrature for each~$l = 1, \dots, \Nlam$. 

Once the values
$\alphaF(\kv,t)$ are known, we approximate $\ufh$ from the Fourier integral \eqref{eq:ufh2},
using the trapezoidal rule with step size $\dk$, truncated at a maximum wavenumber $\Kf$:
\beq\label{eq:ufhApprox}
\ufh(\xv,t) \approx \left(\frac{\dk}{2\pi}\right)^2 \sum_{\nv\in\mathbb Z^{2}:\abs{\nv\dk}\leq\Kf}\alphaF(\nv\dk,t)e^{-i\nv\dk\cdot\xv}.
\eeq
It is convenient to use the same $\dk$ as the near-history in \eqref{eq:dk},
so that coefficients may be added, enabling a single Type II NUFFT to evaluate
the combined near and far history components.
For the reason discussed at the top of Sec.~\ref{sec:cut-off},
this quadrature rule is exact since the spatial support of $\calG_A$ is $r\le A < \Ap$.
Since the far history does not dominate (at least for high frequency problems),
there is little reason (and less convenience) to use the slightly larger $\dk$
in \eqref{eq:ufhApprox} allowed with zero aliasing error.

The cut-off frequency $\Kf$ is determined by the term $\calH_l(\kappa;A)$, 
which depends on $\phiD$ with $\Delta = \Oh{1}$. 
For $A = 2\sqrt{2} + \Delta$, we determine $\Kf$ experimentally for various values of $\Delta$, 
with the results listed in Table~\ref{tab:Kf_estimates}. 
We leave rigorous bounds on  the decay rate of $\alphaF(\kv,t)$ to future work.
In practice, we set $\Delta = 1$ and $\Kf = 80$, sufficient for near
double precision accuracy.

\begin{table}[t]
  \centering
\begin{tabular}{|c|c|c|c|}
\hline
$\Delta$ & $\Kf$  & $\max_{l}\abs{\calH_l(\kappa;A)}$ at $\abs{\kv} = \Kf$        \\
\hhline{|=|=|=|}
2        & 52              & $8.8704\times 10^{-16}$ \\ \hline
1        & 55                 & $9.4563\times10^{-13}$  \\ \hline
1        & 67              & $9.3524\times 10^{-15}$ \\ \hline
1        & 80                & $6.6176\times10^{-16}$  \\ \hline
0.5      & 134           & $8.3037\times 10^{-16}$ \\ \hline
\end{tabular}
\caption{Maximum sizes of the Hankel transform
  coefficients for far-history evaluation,
  at various cut-off wavenumbers $\Kf$ and radial blending width $\Delta$.
  The radial support is $A = 2\sqrt{2} + \Delta$.}
\label{tab:Kf_estimates}
\end{table}

\subsection{Computational complexity}\label{sec:ufh_complexity}

The precomputation of all $\calH_l(\kappa;A)$
requires $\Oh{\Nf \Nlam}$ work,
where $\Nf= (\Kf/\dk)^2$ is the total number of far-history Fourier grid points,
since almost all $\kappa$ values are distinct.
For the evaluation of $\beta_l(\kv,t;\Ap)$, 
we require values of $\sighat(\kv,t)$ at irregular points in time; these are 
interpolated from the stored values of $\sighat(\kv,t)$ on the uniform time grid.
A similar interpolation task arises in the calculation of the local part, and further 
details are provided in section~\ref{sec:local}). 
At each time step, computing $\beta_l(\kv,t;\Ap)$ in~\eqref{eq:beta} requires $\Oh{\Nlam \Nf}$
work, and so does evaluating $\alphaF(\kv,t)$.
We then add these coefficients to the relevant $\alpha(\kv,t)$ coefficients
so that the far history is incorporated into the type II NUFFT for the near history
evaluation at all targets.

\section{Evaluation of the local part}\label{sec:local}

Using the change of variables $\tau\mapsto t - \tau$
(so that $\tau$ now represents time delay into the past),
the local part \eqref{eq:ul1} is
\beq\label{eq:local2}
\ul(\xv,t) = \frac{1}{2\pi}\sum_{j\in\calNd(\xv)}\int_{r_j}^{\delta} \frac{\sigma_j(t - \tau)[1 - \phi(\tau)]}{\sqrt{\tau^2 - r_j^2}}d\tau, 
\eeq
where $r_j = |\xv - \yv_j|$, for $j = 1, \dots, M$,
recalling that $\calNd(\xv)$ represents the set of indices of sources
with distances $r_j \in (0,\delta)$ from the target $\xv$.
The integrand in \eqref{eq:local2} has an inverse square-root singularity
at $\tau = r_j$. However, when $r_j \ll 1$ the integrand
behaves like a pole in the domain $\tau \gg r_j$.
Thus we fix a transition point $r_0$ (that is in practice best set around $\dt/100$),
and use one scheme for $r>r_0$ and a different one for $r<r_0$.

When $r_j>r_0$, a single rule that handles the $-1/2$ power singularity is sufficient.
For this we change variable via $\tau = r_j + s^2$, so that each above integral becomes
\[
\int_0^{\sqrt{\delta - r_j}} \frac{2\sigma_j(t - r_j - s^2)[1 - \phi(r_j + s^2)]}{\sqrt{s^2 + 2r_j}}ds,
\]
which is smooth in $s$. Gauss--Legendre quadrature over this $s$ domain is then rapidly convergent,
needing around $\NL=60$ nodes for close to double precision accuracy.

For $r_j<r_0$ the singularity is followed by a large region with $\sim1/\tau$ behavior,
and for this the transformation $\tau = r_j \cosh s$ is much better because it
grows exponentially with $s$, yet still handles the $-1/2$ power at $\tau=r_j$.
However, for $\tau$ of order $\dt$ up to $\delta$, the nodes for that transformation would
be too coarse. Thus it is more efficient to split this at $\tau_0=2\dt$,
giving upper interval handled directly, and making the integral into
\[
\int_0^{\cosh^{-1}(\tau_0/r_j)}\sigma_j(t - r_j\cosh s)[1 - \phi(r_j\cosh s)]ds \; +\;
\int_{\tau_0}^{\delta} \frac{\sigma_j(t - \tau)[1 - \phi(\tau)]}{\sqrt{\tau^2 - r_j^2}}d\tau
.
\]
We then use Gauss--Legendre quadrature in $s$ for the first interval, and
in $\tau$ for the second. Experiments show that apportioning roughly half of the
nodes to each interval is efficient down to $r_j = 10^{-5}$.
Around $\NL=80$ total nodes are needed to get 12 accurate digits.

With the numerical integration procedure above in hand, we may write
\beq\label{eq:ul_GL}
\ul(\xv_i,t) \approx \frac{1}{2\pi} \sum_{j\in\calNd(\xv_i)}\sum_{m = 1}^{\NL} w_{m}^{(ij)} \sigma_j(t - s_{m}^{(ij)}),
\qquad i = 1, \dots, \Nx,
\eeq
where the quadrature nodes $s_m^{(ij)}$,
$m = 1, \dots, \NL$,
and weights $w^{(ij)}_m$ (which include all factors except $\sigma_j$),
depend only on  the source-target distance $r_{ij} = |\xv_i-\yv_j|$.

The above demands signature values
at times that do not lie on the uniform grid $t_n = n\dt$, $n = 1, \dots, \Nt$, with $\Nt\dt = T$.
Thus we approximate $\sigma_j(t - s_{m}^{(ij)})$ using $p$th-order local interpolation from 
the $p$ nearest values of $\sigma_j$ on the uniform time grid. 
Suppose now that we save $\nm$ time levels of $\sigma_j$ prior to the current time step $t_n$.
Then, for each target $\xv_i$, source $\yv_j$, and corresponding interpolation node $s_{m}^{(ij)}$, we 
compute interpolation weights $\xi_{m,l}^{(ij)}$, where $l = 1, \dots,\nm$ such that 
\beq\label{eq:sigInterp}
\sigma_j(t_n - s_{m}^{(ij)}) \approx \sum_{l = 1}^{\nm}\xi_{m,l}^{(ij)} \sigma_j\left(t_{n - \nm+l}\right). 
\eeq
For our scheme, $\nm = W + 1 + \lceil p/2\rceil$, since the near-history evaluation requires 
$W$ steps in the past, and the $p$th order interpolation requires $\lceil p/2\rceil$ extra prior 
time levels. Such past levels are available even at the first time step since $\sigma_j(t\leq 0)$ 
is assumed to be zero for all $j = 1, \dots, M$.
The approximation of $\ul$ can then be written as
\beq
\ul(\xv_i,t_n) \approx \frac{1}{2\pi} \sum_{j \in\calNd(\xv_i)}\sum_{l = 1}^{\nm} \eta_l^{(ij)}\sigma_j\left(t_{n - \nm+l}\right), \quad\text{ where }\quad\eta_l^{(ij)} = \sum_{m = 1}^{\NL} w_m^{(ij)} \xi_{m,l}^{(ij)}. 
\eeq 
In short, omitting the somewhat tedious algebra, it is straightforward to 
evaluate $\ul$ at the full set of target points using a sparse matrix-vector product.
Assuming the sources are uniformly distributed in the computational domain, and that $\dt$
is equal to their average spacing, the amount of local work is of the order
$\Oh{ \Nx W^3}$.
Here a factor $W^2$ estimates the number of sources in the near field of each target,
while $\nm \approx W+p/2 = \Oh{W}$.
The storage needed for the local evaluation (and the corresponding sparse matrix) 
is of the same order.

\section{Numerical results}\label{sec:results}

We now illustrate the performance and accuracy of the 2D TK-WFP method
when evaluating solutions of the free-space wave equation with given
source signature functions.
We fix the parameters $\Delta = 1$ and $a = 1$ in \eqref{A}--\eqref{Ap}.
The code was implemented in MATLAB (version R2023b),
without using any explicit parallelization.
This calls the parallel C++ library FINUFFT (version 2.4.1)
\cite{finufft,finufftlib} for all NUFFTs, where we set {\tt opts.nthreads=32}
(since larger thread numbers were counter-productive).

In our first example~(\ref{sec:conv}), 
we study the convergence of the method with $\dt$, for various
interpolation orders $p$ discussed in section~\ref{sec:local}.
We then consider three large-scale problems using a regular $900\times 900$ 
target grid covering the computational domain
$B = [-1,1]^2$,
with time-signature functions
containing frequencies ranging from $0$ to $300\pi$, corresponding
to 300 wavelengths across each side of $B$.
In example~(\ref{sec:random}), we place $10^6$ sources at random locations 
in $B$.
In example~(\ref{sec:circle}), we place
$10^5$ sources on a circle with increasing frequency content as the source
sweeps counterclockwise in angle.
In example~(\ref{sec:wiggly}), sources with random frequency content are 
located on a complicated closed curve contained in~$B$.
These last two curve examples model the application to time-domain boundary integral equations.

We use $\tilde u$ to denote the approximation to \eqref{eq:solnRep}
resulting from the 2D TK-WFP algorithm.
Its error is estimated by evaluating the exact solution in~\eqref{eq:solnRep} 
using Gauss--Legendre quadrature applied to the formula
\beq\label{eq:exactSol}
\begin{split}
u(\xv,t) & =  \frac{1}{\pi} \sum_{\stackrel{j = 1}{r_j<t, \, r_j\neq 0}}^M \int_0^{\sqrt{t - r_j}}\frac{\sigma_j(t - r_j - s^2)}{\sqrt{s^2 + 2r_j}}ds,\qquad r_j = \abs{\xv - \yv_j},
\end{split}
\eeq
making use of the change of variable $\tau = s^2 + r_j$ 
to handle the square-root singularities.
Since direct evaluation is prohibitively expensive, we evaluate the 
error on a subset of the full target grid 
in $B$, and only at the final time $T$. 
For a given $\dt$,
we define the absolute and relative error in the max norm over targets, by
\beq
\calE_{\dt}(t) := \norm{u(\cdot,t) - \tilde u(\cdot,t)}_\infty , 
\qquad \tilde\calE_{\dt}(t) := \frac{\calE_{\dt}(t)}{\norm{u(\cdot,t)}_\infty},
\eeq
respectively. Unless otherwise indicated, we use the final time $t = T$.

In all our numerical examples, we choose time signatures of the form
\beq\label{eq:exampleDensities}
\sigma_j(t) = 0.5\left[\erf(5(t - t_{0,j})) + 1\right]\sin(\omega_{j}(t-t_{0,j})), \qquad j = 1, \dots, M,
\eeq
where $t_{0,j}$ is a time offset and $\omega_{j}$ an oscillation frequency, different
for each source. The result is a truly wideband wave field.
Here the erf insures a smooth ``switch-on'' while
slightly growing the bandwidth beyond $\omega_j$.
A good approximation to the frequency beyond which all Fourier transforms are $\Oh{\epsilon}$
(i.e., the $\epsilon$-bandwidth)
is $K_0 = \max_j |\omega_j| + 10 \sqrt{\log(1/\epsilon)}$, the second
term corresponding to the additional frequency content from the erf.

For the large-scale high-frequency
Examples~\ref{sec:random},~\ref{sec:circle}, and~\ref{sec:wiggly}, 
we will set the error tolerance $\epsilon = 10^{-7}$,
interpolation order $p=20$, and the rather small $W = 16$ time steps for the WFP blending width.
Given this, one has $K \approx 2.8 K_0$,
and we set $\dt$ according to \eqref{dt}.
These choices were determined experimentally to yield around 6 digits of relative accuracy.

\subsection{Convergence rate}\label{sec:conv}

In this small-scale test we
place $M=100$ sources randomly in
$B$, with time signatures of the 
form~\eqref{eq:exampleDensities} with $t_{0,j}$ randomly
assigned in $[1.5,7]$, and random frequencies $\omega_j\in[0,10\pi]$.
For this example only we set $\epsilon = 10^{-8}$ and fix $W = 24$.
We then compute the error on a regular $10\times 10$ target grid at the
final time $T = 8$ as a function of the time step $\dt$, for various 
interpolation orders $p = 2, 4, \dots, 10$.
Recall that, in our implementation $K$ must grow
according to \eqref{eq:K} as $\dt\to0$, since $\delta = W\dt$ and $W$ is fixed;
also see Remark~\ref{r:dt0}.
In Figure~\ref{fig:exp5}, we plot the relative error versus $\dt$, for each 
value of $p$. The plot indicates that the rates of convergence match the design
order of accuracy, plateauing at around 1 digit worse than the specified tolerance.

\begin{figure}[ht]
	\centering
		\includegraphics[width=0.65\textwidth]{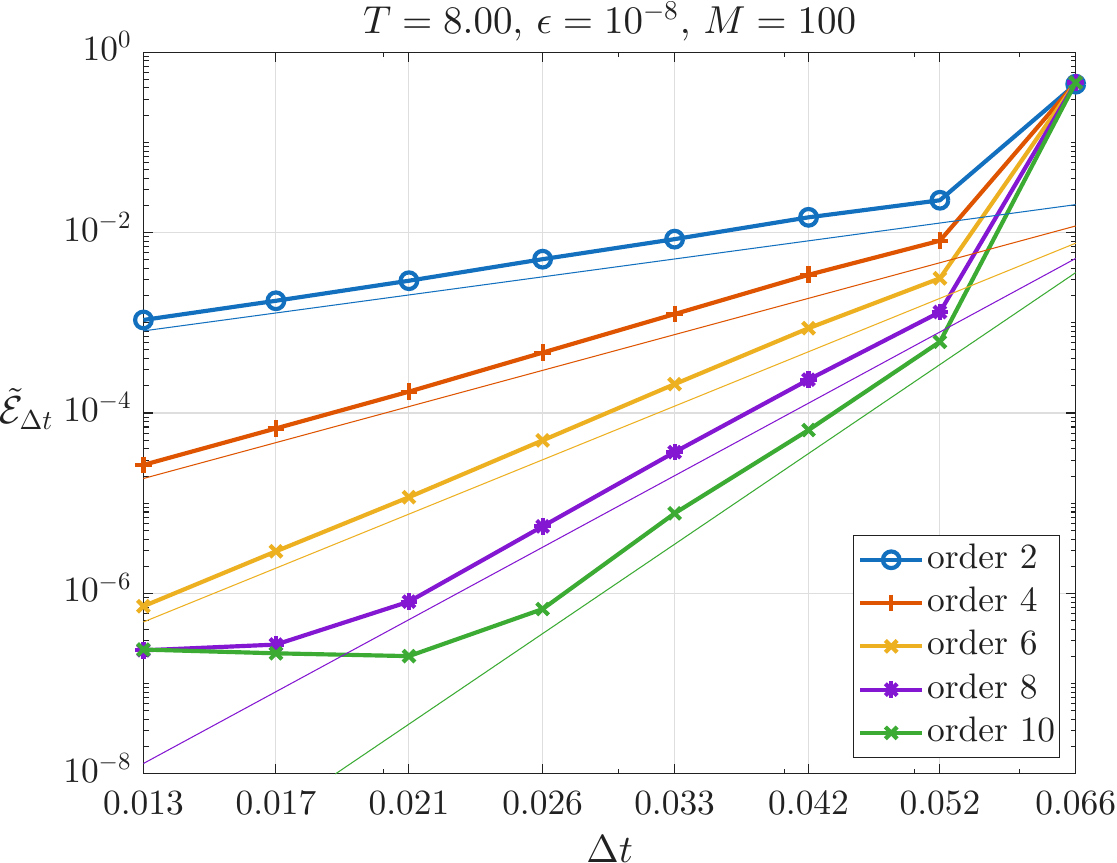}
	\caption{Convergence of the final-time solution computed with the 2D TK-WFP 
          algorithm with various interpolation orders $p$. The thin reference lines with matching colors have slopes $p$.
        }
	\label{fig:exp5} 
\end{figure}

\subsection{Random space-filling sources}\label{sec:random}

To study the performance of the algorithm on a large scale problem,
we set the number of sources to be $M = 10^6$ with random locations 
$\yv_j\in B$, $j = 1, \dots, M$, each having a time signature of the 
form~\eqref{eq:exampleDensities} with random offsets $t_{0,j}\in[1.5,7]$ and 
frequencies $\omega_j = 300\pi z_j^{1/3}$ where $z_j$ are independent and identically distributed (i.i.d.) uniform random samples from $[0,1]$.
Here the $1/3$ power serves to boost the high frequency content, while still covering
the full range $[0,300\pi]$.
We set the final time $T = 8$ and use a time step $\dt = 0.00112$, and set the
cutoff wavenumber $K = 2808$. With $\dk = 0.918$, 
the total number of Fourier modes for the near history evaluation is $N^2 = 6121^2$.
The Fourier modes for the far history are a subset of the near history $\kv$-grid,
and total $\Nf^2 = 125^2$ modes (each with $\Nlam=640$ terms).
The total number of time steps needed is $\Nt = 7150$. 
For this example, $K_0 = 983$, at which frequency each side of the computational domain $B$ 
is $K_0/\pi \approx 313$ wavelengths.
We compute the solution $u$ on a $\Nx = 900\times900$ regular mesh of target 
points. With $\delta  = 0.0179$, the typical number of sources 
within a $\delta$-neighborhood of a target point is $250$.

Figure~\ref{fig:exp1} shows the computed solution 
at $t = 4$ and $t = 8$, with a $5\times$ zoomed-in view in the 
subdomain $[0.4,0.6]^2$. For the zoomed-in window,
we calculate the solution over a fine $360\times360$ grid.  
The relative error, computed at the final time $T = 8$ on a $5\times5$ 
subset of the target mesh, is $\tilde\calE_{\dt} =   5.31\times 10^{-7}$.

Table~\ref{tab:exp1_timings} shows the time it takes to evaluate each 
solution component in \eqref{eq:split}, using
an AMD Rome node with two 64-core EPYC 7742 2.8 GHz CPUs and 1024 GB RAM, 
with 422 GB of the memory utilized. 
We evaluate the solution at 8 time slices only, for $t = 1, 2,\dots, 8$.
If the solution were evaluated using 2D TK-WFP at each of the 
$\Nt$ time steps, the estimated CPU time is 87 hours.
The naive direct evaluation of $u$ using \eqref{eq:exactSol} needs
3600 Gauss--Legendre nodes (which we verified were required, because of the high frequency
content).
Our single-threaded direct evaluation at all time steps
is estimated to demand about $6\times10^8$ hours, so that
the speed-up factor is $6\times 10^{6}$.
Since our 2D TK-WFP implementation exploits up to 32 parallel threads (in the NUFFTs),
it has some average parallel acceleration factor significantly less than 32.
Yet, compensating for this parallel factor still leaves the 2D TK-WFP
algorithm {\it at least $10^5$ times more efficient than direct evaluation}.

\begin{figure}[t]
	\includegraphics[width=0.49\textwidth]{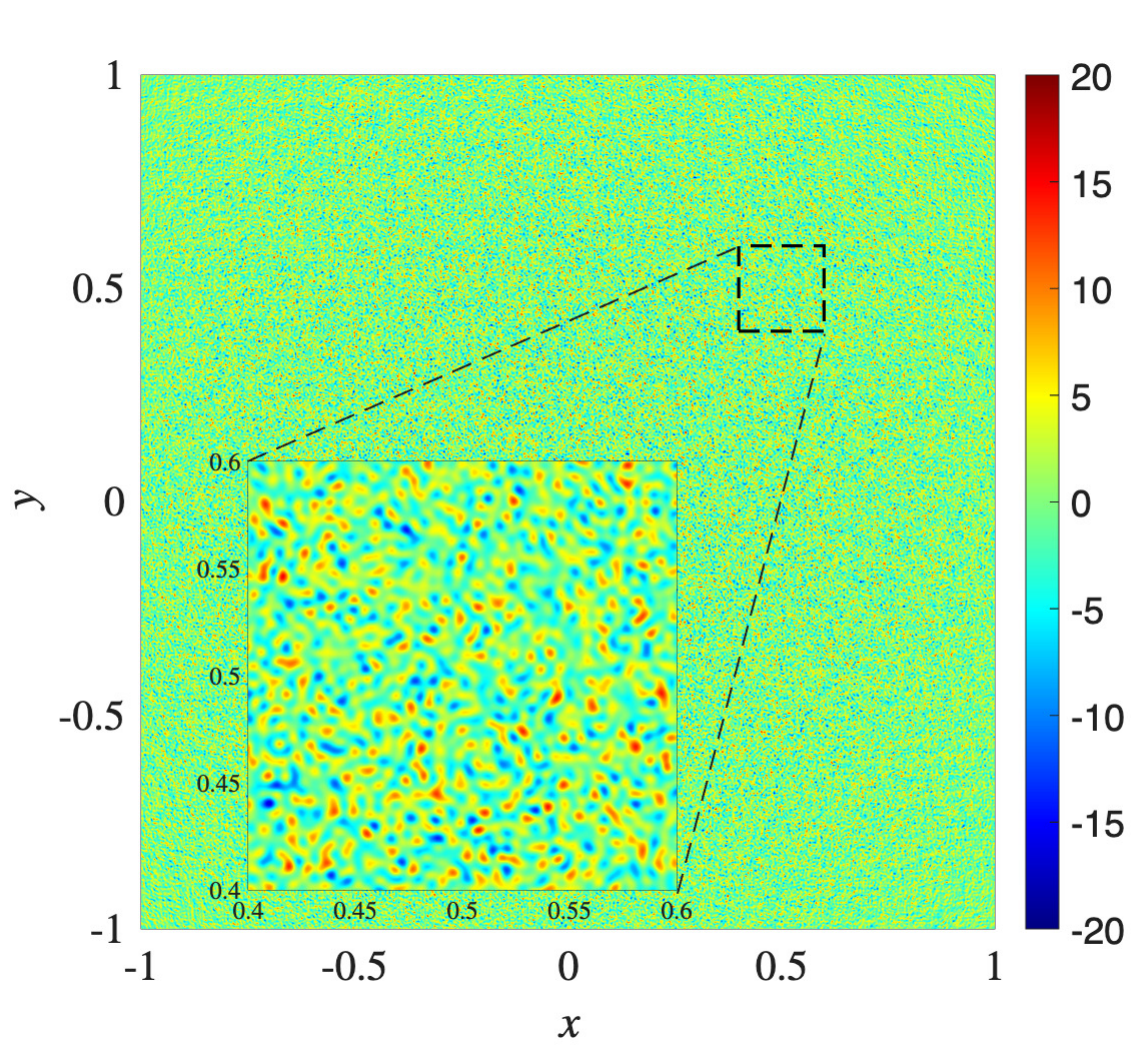}
	\includegraphics[width=0.49\textwidth]{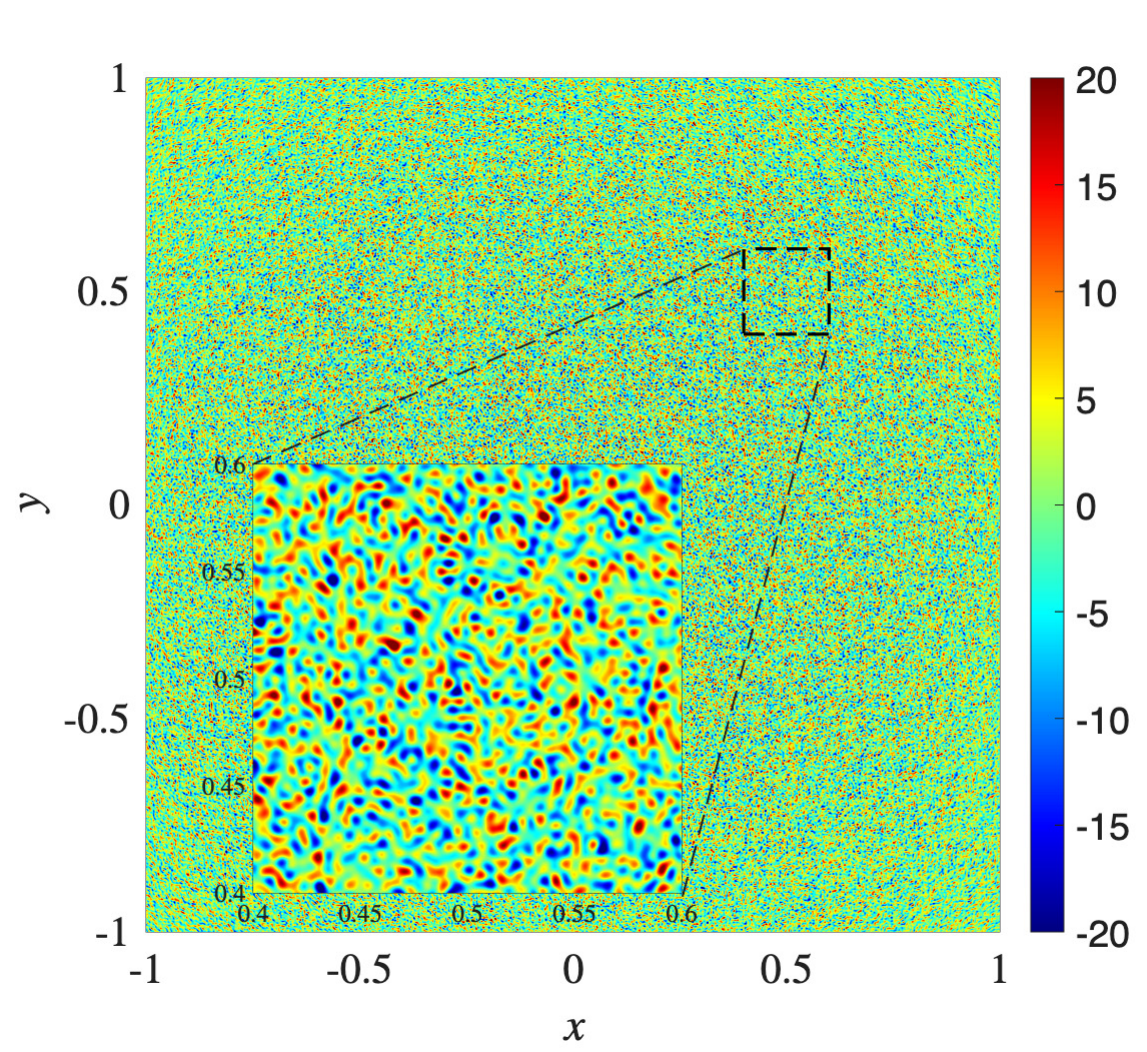}
	\caption{Computed solution $\tilde u$ at $t = 4$ (left) and $t = 8$ (right) for example \ref{sec:random}, on a $900\times 900$ target mesh, with $M = 10^6$ sources.
          $5\times$ zoomed-in views are inset.
          The solution contains all frequencies from zero up to 
          a maximum frequency, at which the domain is $313$ wavelengths on a side.
          The estimated relative maximum error 
at $t = 8$ is $\tilde\calE =   5.3\times 10^{-7}$.}
	\label{fig:exp1}
\end{figure}

\begin{table}[ht]
	\centering
	\begin{tabular}[t]{|l|l|}
	\hline
  Task & CPU time \\
  \hhline{|=|=|}
  precomputation                         & 7.6 h   \\ \hline
  $\ul$ eval.\ per time-step              & 23.3 sec \\
  $\uh$ eval.\ per time-step              & \ 0.6 sec \\
  $\ufh$ eval.\ per time-step              & \ 0.1 sec \\
  $\alpha(\kv,t)$ update per time-step   & \ 9.1 sec \\ 
  $\beta(\kv,t)$ update per time-step   & \ 0.02 sec \\ 
  Type I NUFFT update per time-step & \ 6.7 sec\\ \hline
  Total per time-step (all targets)                    &  39.8 sec \\ \hline
  Direct $u$ eval.\ per time-step {\it per target}       & \ 5.9 min   \\ \hhline{|=|=|}
  2D TK-WFP, total for $0\le t\le 8$         &  86.6 h (est.) \\ \hline
  Direct eval, total for $0\le t\le 8$   & $ 5.7\times10^{8}$ h (est.)  \\ \hline
\end{tabular}
	\caption{Breakdown of CPU timings for Example~\ref{sec:random}
          with $M=1000000$ sources and $\Nx=810000$ targets.
          The total 2D TK-WFP cost over all time-steps, and the total naive direct quadrature
          evaluation cost
          are estimated from their average costs for one time-step,
indicated by~(est.). }
	\label{tab:exp1_timings} 
\end{table}

\subsection{Sources on a circle}\label{sec:circle}

To illustrate a discretized space-time layer potential,
we place $M = 10^5$ equispaced sources on the circle
\beq
\xv(s) = [0.8\cos s + 0.2, 0.8\sin s + 0.2],\qquad s\in[0,2\pi],
\eeq
which touches the boundary of $B$.
The signatures are as in \eqref{eq:exampleDensities}
with $t_{0,j}\in[1.5,7]$ and $\omega_j\in[0,300\pi]$
both linearly increasing as $s$ varies from $0$ to $2\pi$:
the waves sweep around the circle while growing in frequency.
With a final time $T = 8$, the parameters $\dt$, $\Nt$, $K_0$, $K$, $N$, $\dk$, 
and $\Nf$, are set as in the previous example.
Again we evaluate the solution on a $900\times900$ target grid.
Each target point has on average $25$ sources in its $\delta$-neighborhood
(although most targets have none).

Figure~\ref{fig:exp2} shows snapshots of the computed solution,
along with $10\times$ zoomed-in views near caustic areas as shown by the boxes.
Interacting wave fronts of increasing frequency are clearly visible,
as is the high resolution obtained by the method. 
The relative error at the final time is estimated on a $5\times 5$ subset of 
the target grid, giving $\tilde{\calE} = 7.9\times 10^{-6}$.

Table~\ref{tab:exp2_timings} shows the time required for each 
solution component in \eqref{eq:split}, using
an AMD Rome node with two 64-core EPYC 7742 3.4 GHz CPUs and 1024 GB RAM, 
with 340 GB of the memory utilized.
Again allowing for a parallel acceleration factor of $\approx30$,
our algorithm is still around $10^{5}$ times more efficient 
than direct evaluation.

\begin{figure}[t]
          \includegraphics[width=0.49\textwidth]{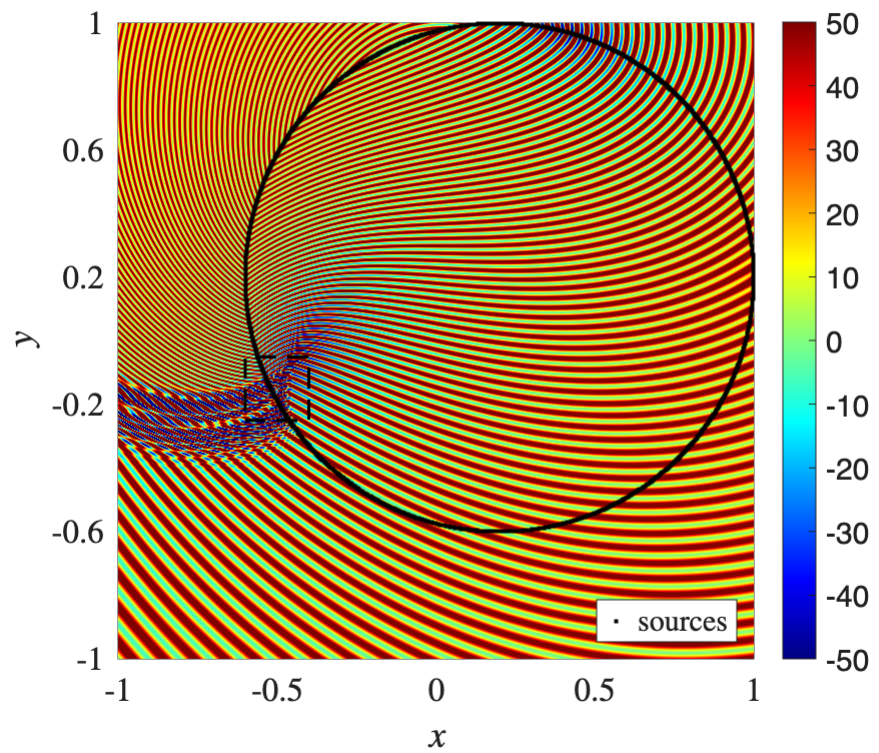}
          \includegraphics[width=0.49\textwidth]{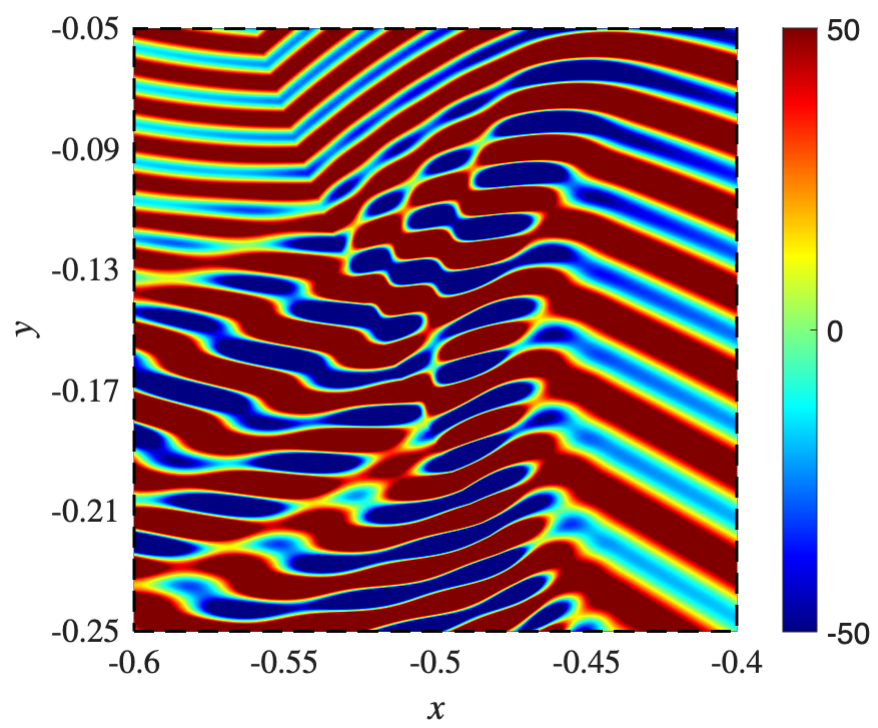}
          \\
          \includegraphics[width=0.49\textwidth]{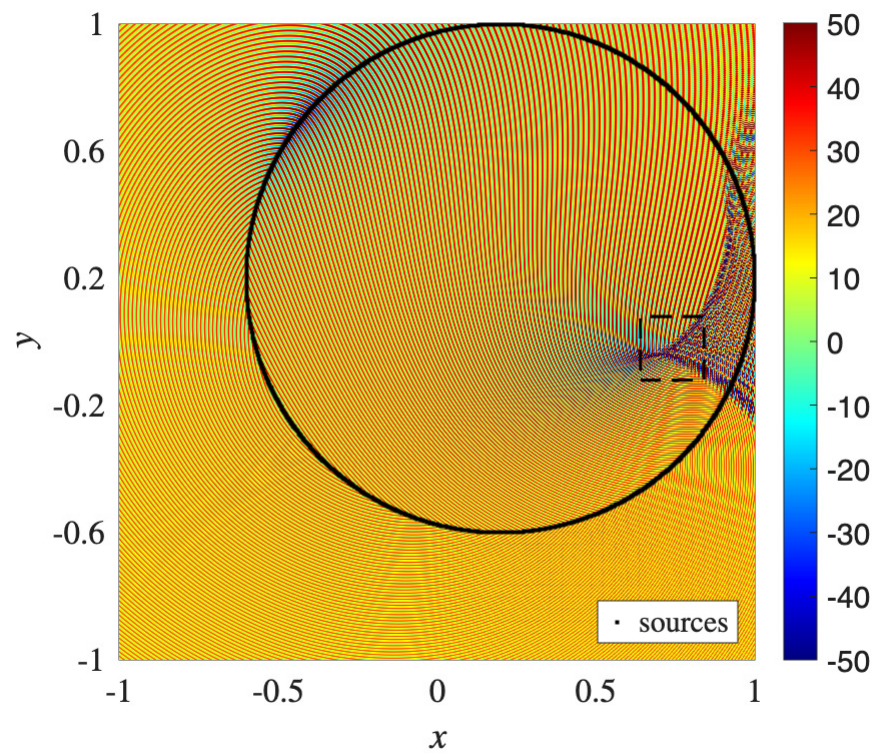}
          \includegraphics[width=0.49\textwidth]{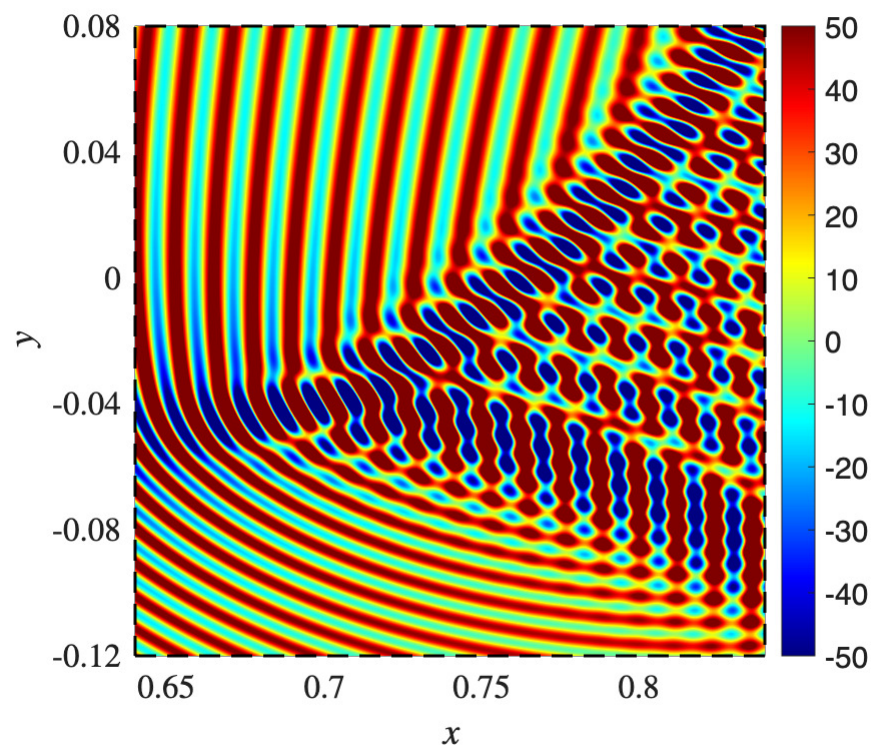}
          \vspace{-2ex}
\caption{Computed solution for example~\ref{sec:circle} at $t = 5$ and $t = 8$ (top left and bottom left) with $10^5$ sources on a circle, with deterministic
frequencies increasing up to $300\pi$ as the circle is swept in a counter-clockwise
direction. To the right of each plot is a $10\times$ zoomed-in view,
to subregion $[-0.6,-0.4]\times[-0.25,-0.05]$ (top) or $[0.64,0.84]\times[-0.12,0.08]$
(bottom), shown as dashed squares in the left plots.
The relative error computed 
at the final time is $\tilde\calE = 7.88\times 10^{-6}$. }
		\label{fig:exp2} 
	\end{figure}

  \begin{table}[ht]
	\centering
	\begin{tabular}[t]{|l|l|}
	\hline
  Task & CPU time \\
  \hhline{|=|=|}
  precomputation                         & 52 min   \\ \hline
  $\ul$ eval.\ per time-step              & 1.1 sec \\
  $\uh$ eval.\ per time-step              & 0.6 sec \\
  $\ufh$ eval.\ per time-step              & 0.1 sec \\
  $\alpha(\kv,t)$ update per time-step   &  8.9 sec \\ 
  $\beta(\kv,t)$ update per time-step   &  0.02 sec \\ 
  Type I NUFFT update per time-step & 5.3 sec\\ \hline
  Total per time-step (all targets)                   &  16 sec \\ \hline
  Direct $u$ eval.\ per time-step {\it per target}       & 56 sec   \\ \hhline{|=|=|}
  2D TK-WFP, total for $0\le t\le 8$         &  33 h (est.) \\ \hline
  Direct eval, total for $0\le t\le 8$   & $ 9 \times10^{7}$ h (est.)  \\ \hline
\end{tabular}
	\caption{Breakdown of CPU timings for Example~\ref{sec:circle},
          with $100000$ sources on a circle, and $810000$ targets.
        }
	\label{tab:exp2_timings} 
\end{table}

  \begin{figure}[ht]
  \includegraphics[width=0.47\textwidth]{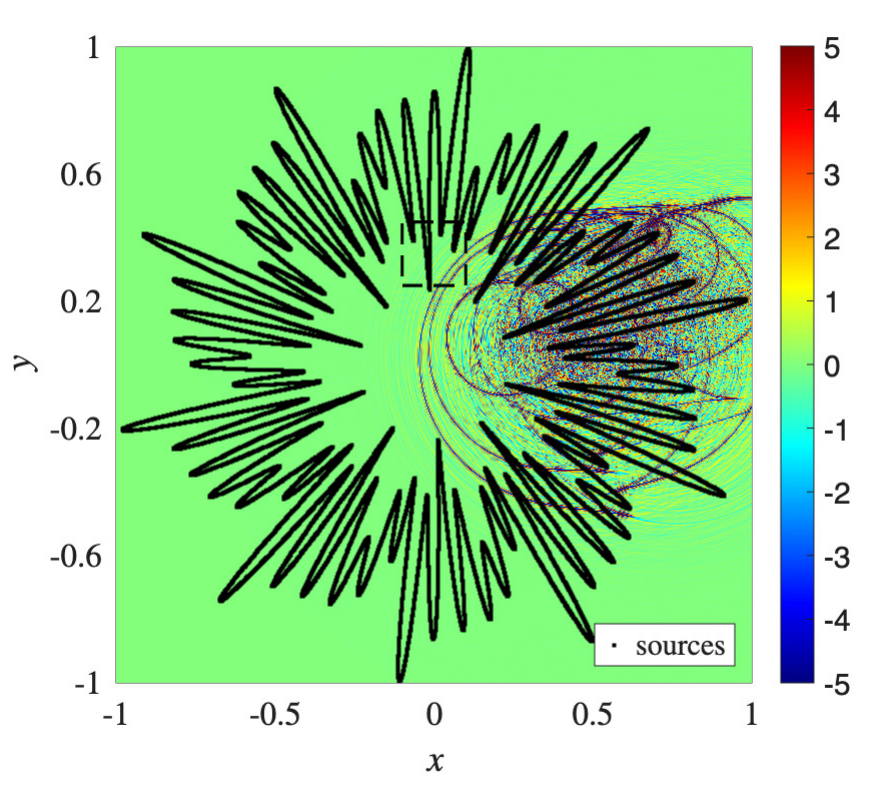}
  \includegraphics[width=0.47\textwidth]{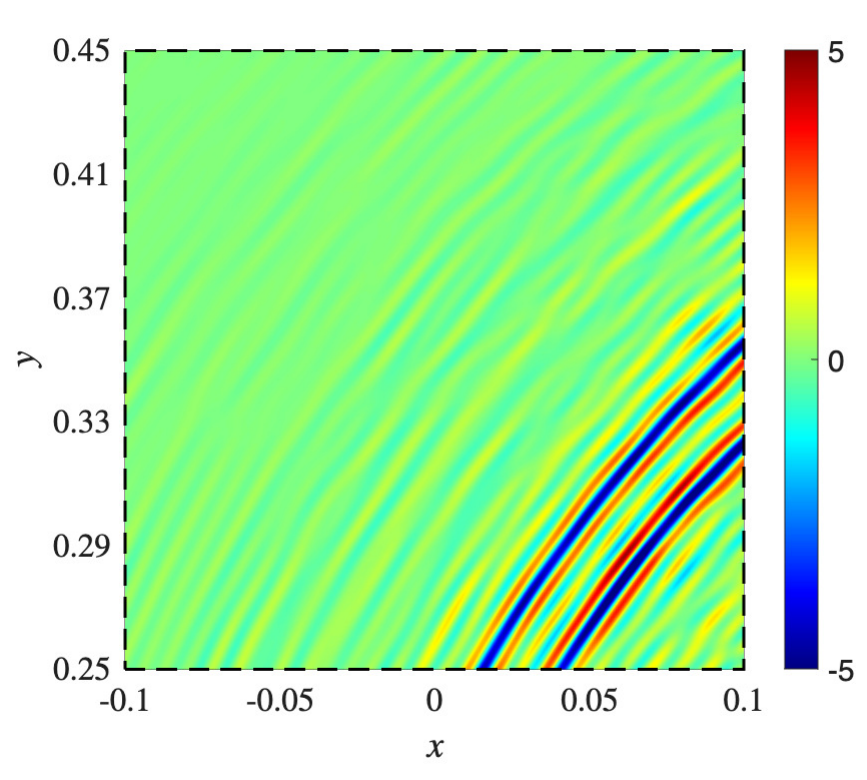}
  \\
  \includegraphics[width=0.47\textwidth]{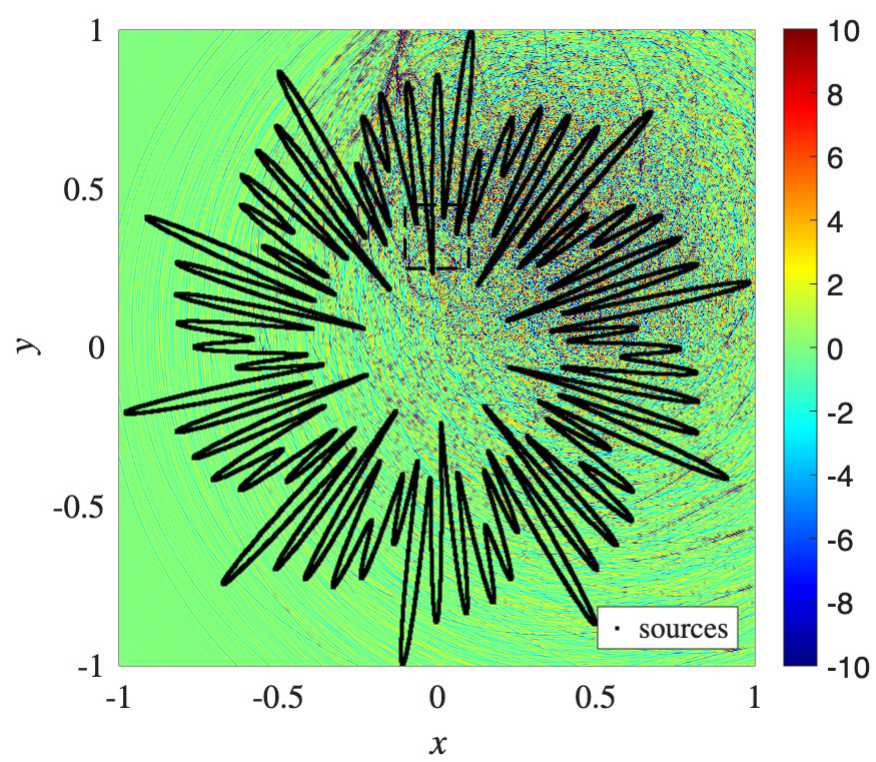}
  \includegraphics[width=0.47\textwidth]{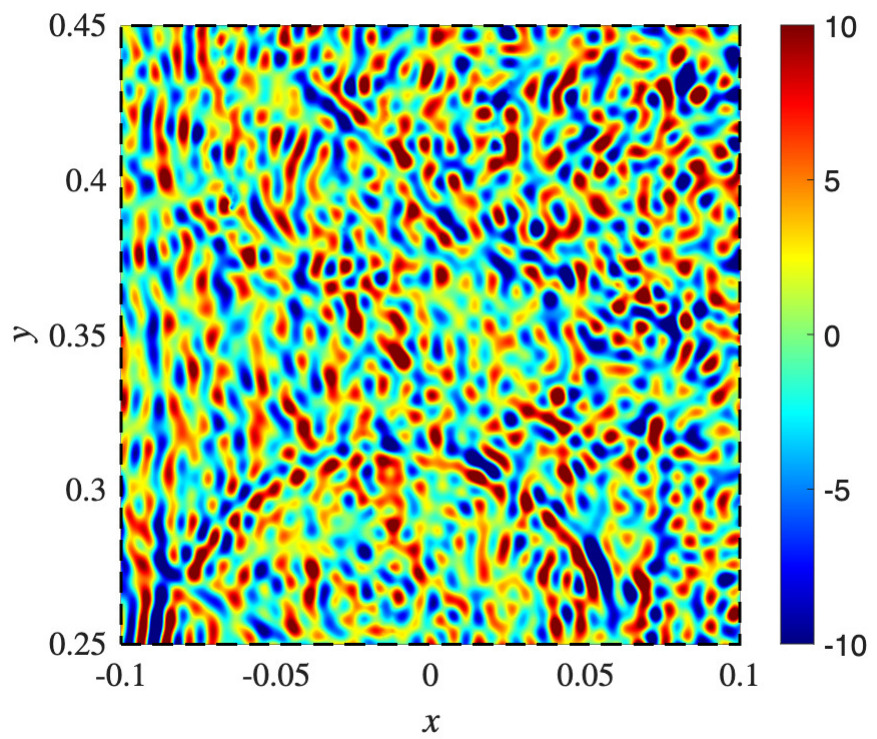}
  \\
  \includegraphics[width=0.47\textwidth]{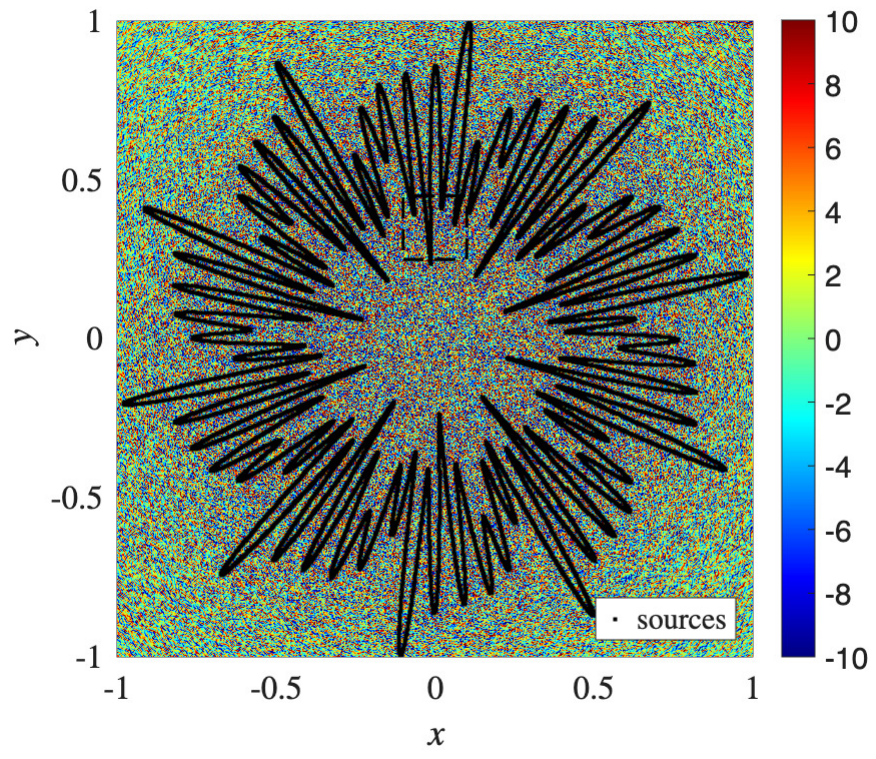}
  \includegraphics[width=0.47\textwidth]{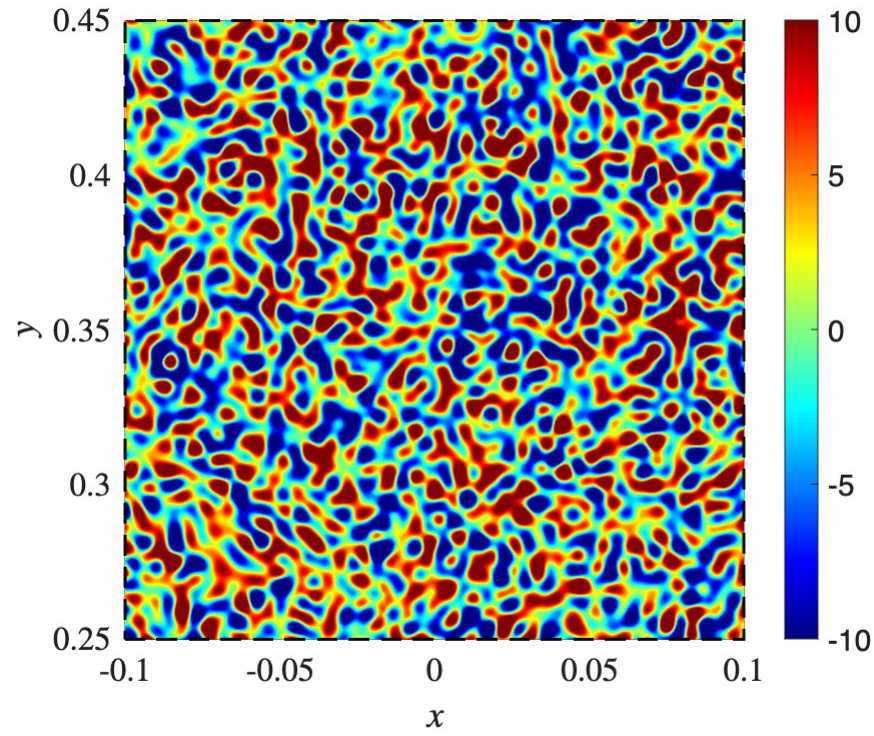}
  \vspace{-2ex}                
  \caption{Computed solution $\tilde u$ for Example~\ref{sec:wiggly},
    at $t = 2,3$, and $8$ (top to bottom) and $10\times$ zoomed-in views (right column)
    in the fixed subdomain $[-0.1,0.1]\times[0.25,0.45]$.
    There are $M=10^6$ sources on the curve, and a $900\times900$ target grid.
    The side of the square domain $[-1,1]^2$ is about $313$ wavelengths,
    at the maximum frequency of the source time signatures.
    The relative error in the max norm is estimated to be
    $\tilde{\calE} = 4.3\times10^{-7}$.}
  \label{fig:exp9} 
\end{figure}

\begin{table}[th]
\centering
	\begin{tabular}[t]{|l|l|}
	\hline
  Task & CPU time \\
  \hhline{|=|=|}
  precomputation                         & 8 h   \\ \hline
  $\ul$ eval.\ per time-step              & 26 sec \\
  $\uh$ eval.\ per time-step              & \ 0.6 sec \\
  $\ufh$ eval.\ per time-step              & \ 0.1 sec \\
  $\alpha(\kv,t)$ update per time-step   & \ 9 sec \\ 
  $\beta(\kv,t)$ update per time-step   & \ 0.02 sec \\ 
  Type I NUFFT update per time-step & \ 6.6 sec\\ \hline
  Total per time-step (all targets)                    &  42.4 sec \\ \hline
  Direct $u$ eval.\ per time-step {\it per target}       & 7.5 min   \\ \hhline{|=|=|}
  2D TK-WFP, total for $0\le t\le 8$         &  92 h (est.) \\ \hline
  Direct eval, total for $0\le t\le 8$   &  $7.2\times10^{8}$ h (est.)  \\ \hline
\end{tabular}
\caption{CPU timings for Example~\ref{sec:wiggly} on the complicated
  curve,
  with $1000000$ sources and $810000$ targets,
  comparing
  the cost of the proposed method to direct evaluation.}
\label{tab:exp9_timings}
\end{table}

\subsection{Sources on a complicated closed curve}\label{sec:wiggly}

In our final experiment, we place
$M = 10^6$ sources on on a highly-oscillatory curve
\beq
\begin{gathered}
\xv(s) = [r(s)\cos s, r(s)\sin s],\qquad s\in[0,2\pi], \\
 r(s) =  0.61 + 0.2\cos(60s) - 0.1\sin(20s) + 0.05\cos(30s)- 0.1\cos(40s). 
\end{gathered}
\eeq
Each source has a time signature $\sigma_j$ defined 
as in \eqref{eq:exampleDensities}
with time offsets $t_{0,j}\in[1.5,7]$ assigned in increasing order as $s$ increases,
but $\omega_j = 300\pi z_j^{1/3}$ where $z_j$ are uniformly random i.i.d.\ samples
on $[0,1]$, as before.
We set the final time $T = 8$ and used
the same parameters $\dt$, $\Nt$, $K_0$, $K$, $N$, $\dk$, and $\Nf$ 
as in the previous two examples. 
The solution is computed on a $900\times900$ target grid. Each target point 
has, on average, $250$ sources in its $\delta$-neighborhood. 

Figure~\ref{fig:exp9} shows the computed solution at $t = 2, \ 3$, and $8$ along with
$10\times$ zoomed-in views that renders the wavelength visible.
Some rather curious patterns are generated in these examples. At time 
$t=2$, for example, circular fronts seem to emanate from regions of high curvature along the
curve. We do not attempt an explanation of this here, noting only that
these calculations are fully resolved;
the relative error at $T = 8$ on a $5\times 5$ subset of the target grid 
is $\tilde{\calE} = 4.3\times10^{-7}$. 

Table~\ref{tab:exp9_timings} shows the time required (as in the 
previous examples), using
an AMD Rome node with two 64-core EPYC 7742 3.4 GHz CPUs and 1024 GB RAM, 
with 442 GB of the memory utilized. 
Direct evaluation of $u$ from \eqref{eq:exactSol} used again a 3600-point
Gauss--Legendre quadrature.
Allowing for parallelism as before, the
estimated speed-up factor over direct evaluation exceeds $2\times10^5$.

\section{Conclusion}\label{sec:conclusion}

We have introduced the 2D 
truncated-kernel, windowed Fourier projection (2D TK-WFP) algorithm for 
evaluating free-space hyperbolic potentials
due to wideband point sources,
with sources and targets confined to a bounded domain. 
The algorithm compresses the ``history part'' of the solution 
in \eqref{eq:solnRep} by splitting it into a non-smooth local part,
evaluated using direct quadrature, and two smooth components, the 
near history and the far history. Both
are approximated by Fourier representations, using 
the non-uniform fast Fourier transform
\cite{finufft,finufftlib}. 
Such a partition is made efficient through the use of narrow temporal,
and wide radial, smooth blending functions.
No radiation boundary condition is required.

A critical component of the 2D TK-WFP method is the suppression of 
high frequency oscillations in the spectral representation of the far history,
associated with the weak Huygens' principle and hence absent in the 3D case.
This is accomplished by using a radially-truncated variant of the spectral
wave kernel, combined with a sum-of-exponential approximation that allows us to
develop an efficient recurrence in time for each spectral mode (exploiting
the semigroup structure).
The total cost of the algorithm is quasi-linear with respect to the number of sources $M$ and time steps $\Nt$, while direct evaluation is quadratic in each.
The total number $N^2$ of Fourier modes required, however, is the square of the side
length as measured in wavelengths,
so that the net algorithmic cost is $O(\Nt (M + N^2 \log N))$.
The convergence order is controlled by that of the 1D temporal interpolations, which
may be made very high. All other aspects of convergence are spectral,
since they are controlled by Fourier truncation and quasi-optimal blending
functions (smooth partitions of unity).
The use of Fourier representations avoids numerical grid-based dispersion errors.
Large-scale examples with around one million sources and targets
covering 90,000 square wavelengths
were shown to be computable on a
single compute node
with six digits of precision.

This more elaborate 2D work complements recent WFP evaluation algorithms in
1D \cite{wfp2025} and 3D \cite{tkwfp3d}.
We believe that the 2D and 3D TK-WFP evaluation algorithms can 
serve as the key ingredient in the
efficient time marching of potential-theoretic solutions of challenging time-domain wave scattering problems.

\bibliographystyle{siamplain}
\bibliography{refs}
 
\end{document}